\title[Integral Ricci Curvature for Graphs]{Integral Ricci Curvature for Graphs}
\author{Xavier Ramos Oliv\'e}
\thanks{This work was funded in part by the National Science Foundation under Grant Number 1916439, through the participation in the \emph{2023 MRC on Ricci Curvatures of Graphs and Applications to Data Science}.}
\address{Department of Mathematics and Computer Science\\
College of the Holy Cross\\
Swords Hall 326\\
1 College St\\
Worcester, MA 01610}
\email{\href{mailto:xramos-olive@holycross.edu}{xramos-olive@holycross.edu}}
\date{}
\keywords{Lin-Lu-Yau Ricci Curvature, Curvature on Graphs, Ollivier Ricci Curvature, Integral Ricci Curvature, Bonnet-Myers Diameter Estimate, Moore estimate, eigenvalue estimate, Graph Laplacian}
\subjclass[2020]{53C21, 53A70, 05C12, 05C99}
\theoremstyle{definition} 
\newtheorem{definition}{Definition}[section]
\newtheorem{example}{Example}[section]
\newtheorem{lemma}{Lemma}[section]
\newtheorem{remark}{Remark}[section]
\newtheorem{theorem}{Theorem}[section]
\newtheorem*{theorem*}{Theorem}
\begin{document}
\maketitle

\begin{abstract}
We introduce the notion of integral Ricci curvature  $I_{\kappa_0}$ for graphs, which measures the amount of Ricci curvature below a given threshold $\kappa_0$. We focus our attention on the Lin-Lu-Yau Ricci curvature. As applications, we prove a Bonnet-Myers-type diameter estimate, a Moore-type estimate on the number of vertices of a graph in terms of the maximum degree $d_M$ and diameter $D$, and a Lichnerowicz-type estimate for the first eigenvalue $\lambda_1$ of the Graph Laplacian, generalizing the results obtained by Lin, Lu, and Yau. All estimates are uniform, depending only on geometric parameters like $\kappa_0$, $I_{\kappa_0}$, $d_M$, or $D$, and do not require the graphs to be positively curved.
\end{abstract}

\section{Introduction}
\subsection{Curvature in the Smooth Setting}
\vspace{1em}

Ricci curvature plays a central role in Geometric Analysis. The Bonnet-Myers estimate \cite{Myers} establishes that an $n$-dimensional Riemannian manifold satisfying ${\rm Ric}\geq (n-1)K>0$ has diameter bounded above by
\[{\rm Diam}\leq \frac{\pi}{\sqrt{K}}.\]
Lichnerowicz \cite{Lichnerowicz} showed that the first nonzero eigenvalue $\lambda_1$ of the Laplace-Beltrami operator satisfies
\[\lambda_1\geq nK.\]
There exists very extensive literature of results using pointwise lower bounds on the Ricci curvature to prove volume comparison results, numerous kinds of eigenvalue estimates, spectral gap estimates, estimates on the Betti numbers, Sobolev inequalities, estimates on the heat kernel, Harnack inequalities, etc. The strength of these results is that they provide us with estimates that hold uniformly for all manifolds whose Ricci curvature is bounded below by $(n-1)K$. We refer the interested reader to the survey \cite{DaiWei} or the book \cite{li-book} and the references therein.

In \cite{Gallot}, Gallot introduced the notion of integral curvature $\overline{k}(p,K)$ on $n$-dimensional Riemannian manifolds, which measures the amount of Ricci curvature below the threshold $(n-1)K$ in an $L^p$ sense. Petersen and Wei \cite{PetersenWei} generalized the Bishop-Gromov volume comparison results to integral curvature conditions, enabling the use of integral curvature assumptions to prove similar estimates to the ones mentioned above, allowing the curvature to dip below the threshold $(n-1)K$. Integral curvature conditions are less sensitive to small perturbations of the geometry than pointwise lower bounds on the curvature, providing estimates that apply uniformly to a much larger class of manifolds. Two examples of such estimates are due to Aubry \cite{Aubry}, who generalized the Bonnet-Myers and the Lichnerowicz estimates, showing that for $K>0$ and $p>n/2$, we have
\begin{equation}\label{aubry_diam}
{\rm Diam}\leq \frac{\pi}{\sqrt{K}}[1+C(p,n)\overline{k}(p,K)]    
\end{equation}
and
\begin{equation}\label{aubry_eigen}
    \lambda_1 \geq nK[1-C(p,n)\overline{k}(p,K)].
\end{equation}
These estimates are sharp in the following sense. In the limit where ${\rm Ric}\geq (n-1)K>0$ we have that $\overline{k}(p,K)=0$, therefore Aubry's estimates recover the classical results of Bonnet-Myers and Lichnerowicz when the curvature is bounded below pointwise. The quantity $\overline{k}(p,K)$ acts as an error term. These estimates allow the curvature to be significantly below the threshold $(n-1)K$, it can even be negative. As long as this does not happen in a set of large measure (as long as $\overline{k}(p,K)$ is small enough), we can obtain meaningful estimates that are a perturbation of the pointwise ones. It is well known that smallness of $\overline{k}(p,K)$ is a necessary condition (see e.g. \cite{AndersonRamosSpinelli} or \cite{Gallot}).

Integral curvature conditions have been used to prove Sobolev inequalities (\cite{DaiWeiZhang},  \cite{Gallot}, \cite{PetersenSprouse}, \cite{rose2}), eigenvalue and spectral gap estimates (\cite{Aubry}, \cite{Gallot}, \cite{RamosRoseWangWei}, \cite{RSWZ}), and estimates for the solutions to elliptic and parabolic equations (\cite{Azami}, \cite{PostRamosRose}, \cite{ramos}, \cite{rose2}, \cite{Wang2}, \cite{WangWang}, \cite{ZhangZhu1}, \cite{ZhangZhu2}), among many other results. Recently, these conditions have been generalized further into different directions, such as the Kato condition (\cite{Carron}, \cite{CarronRose}, \cite{rose}, \cite{rose3}, \cite{RoseStollmann}, \cite{RoseWei}) and integral curvature conditions on the Bakry-\'Emery Ricci curvature on smooth metric measure spaces (\cite{Mi}, \cite{RamosSeto}, \cite{Tadano}, \cite{WangWei},\cite{Wu1}, \cite{Wu2}).

\subsection{Curvature in the Discrete Setting}
\vspace{1em}

Since Ricci curvature plays such a central role in Geometric Analysis, and particularly in the study of the Laplacian and its eigenvalues, it is reasonable to expect that generalized notions of Ricci curvature on graphs would be very valuable in the study of Spectral Graph Theory. Along those lines, Chung and Yau introduced the notion of Ricci flat graphs in \cite{ChungYau} and studied Harnack Inequalities on these spaces, translating well known results from the smooth setting to the discrete setting. The development of synthetic Ricci curvature notions on geodesic metric spaces, which started with the work of Sturm and Lott-Villani (\cite{LottVillani}, \cite{Sturm1},\cite{Sturm2}), opened up the possibility to generalize these classical estimates from the smooth manifold setting to the setting of metric spaces.  Ollivier \cite{Ollivier} introduced the Ollivier Ricci curvature (see \eqref{Ollivier_curvature}), which is defined for Markov chains on metric spaces. In particular, this notion of curvature can be applied to a graph $G$. 

Throughout this article, we assume that $G=(V,E)$ is a connected simple graph, that is to say, it is an unweighted, undirected graph containing no graph loops or multiple edges ($E\subseteq V\times V$). If $\{x,y\}\in E$ for some $x,y\in V$, we will denote the edge from $x$ to $y$ by $xy\in E$.  We will also assume $V$ to be finite for simplicity, although this is not strictly necessary. We will study $(G,d)$ as a length metric space, where the distance between $x,y\in V$, $d(x,y)$, is taken to be the length of the shortest path between $x$ and $y$. Then, the diameter of $G$ is the largest distance between two vertices, ${\rm Diam}(G)=\displaystyle\max_{x,y\in V} d(x,y)$.

For any $\alpha\in [0,1]$, using the $\alpha-$lazy random walk on $G$ (see the definition in \eqref{alpha_lazy}) as a Markov process, the Ollivier Ricci curvature defines the so-called $\alpha$-Ricci curvature $\kappa_\alpha(x,y)$ between any two nodes $x,y\in V$  (see \eqref{alpha_Ricci_curvature}). In \cite{LinLuYau}, Lin-Lu-Yau introduced a limiting version of the $\alpha$-Ricci curvature,
\[\kappa_{LLY}(x,y)=\lim_{\alpha\rightarrow 1^{-}} \frac{\kappa_\alpha (x,y)}{1-\alpha},\]
which is known as the Lin-Lu-Yau Ricci curvature $\kappa_{LLY}(x,y)$. This notion of curvature presents a few advantages compared to the $\alpha$-Ricci curvature, like being well behaved with products of graphs. This is the notion of Ricci curvature on graphs that we will focus on in this article. A very interesting result from M\"unch-Wojciechowski \cite{MunchWojciechowski} shows that there exists an equivalent way of defining the Lin-Lu-Yau Ricci curvature which is limit-free. A precise definition of $\kappa_\alpha(x,y)$ and $\kappa_{LLY}(x,y)$ can be  found in Section~\ref{sec_Background}.

\subsection{Main Results}
\vspace{1em}

Among other results, \cite{LinLuYau} proves a Bonnet-Myers'-type diameter estimate, a Moore-type bound on the number of vertices $|V|$, and a Lichnerowicz-type eigenvalue estimate for positively curved graphs, i.e. graphs for which $\kappa_{LLY}(x,y)\geq \kappa_0>0$ for all edges $xy\in E$; see their precise statements in Theorem~\ref{LLY_BonnetMyers}, Theorem~\ref{LLY_Moore}, and Theorem~\ref{LLY_Lichnerowicz}. Our goal is to prove generalizations of these three results for graphs that are not necessarily positively curved, using instead an integral curvature quantity $I_{\kappa_0}$ (or $I^\alpha_{\kappa_0}$), which measures the amount of Lin-Lu-Yau Ricci curvature (or $\alpha$-Ricci curvature, respectively) below a given threshold $\kappa_0$ (or $(1-\alpha)\kappa_0$, respectively). The quantities $I_{\kappa_0}$ and $I^\alpha_{\kappa_0}$ are defined in such a way that $I_{\kappa_0}=0$ if and only if $\kappa_{LLY}(x,y)\geq \kappa_0$ for all $xy\in E$, and $I_{\kappa_0}^\alpha = 0$  if and only if $\kappa_\alpha(x,y)\geq \kappa_0$ for all $xy\in E$. This way, our estimates recover the results in \cite{LinLuYau} when $I_{\kappa_0}=0$ (respectively $I_{\kappa_0}^\alpha =0$). The precise definition of $I_{\kappa_0}$ and $I^\alpha_{\kappa_0}$ is given in Section~\ref{sec_intcurv} (see Definition~\ref{integralcurv_def}). In particular, our main results are as follows. 

First, we prove a Myers' type diameter estimate for graphs that are not necessarily positively curved, extending Theorem~\ref{LLY_BonnetMyers} to integral curvature conditions in the spirit of Aubry's result \eqref{aubry_diam}.

\begin{theorem}\label{BonnetMyers_intcurv}
    For any $\kappa_0>0$ and $\alpha\in [0,1)$, the diameter of $G$ can be bounded by 
    \begin{equation}\label{alpha_diam_bound}
        {\rm Diam}(G) \leq \left\lfloor\frac{2+ \displaystyle \frac{I^\alpha_{\kappa_0}}{(1-\alpha)}}{\kappa_0}\right\rfloor
    \end{equation}
    and 
    \begin{equation}\label{LLY_diam_bound}
        {\rm Diam}(G) \leq \left\lfloor\frac{2+ I_{\kappa_0}}{\kappa_0}\right\rfloor. 
    \end{equation}
\end{theorem}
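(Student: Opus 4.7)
My plan is to adapt the Kantorovich--Rubinstein duality argument that underlies the classical Lin--Lu--Yau bound in Theorem~\ref{LLY_BonnetMyers}, and to absorb the curvature deficit along a diameter-realizing geodesic into the integral term $I^\alpha_{\kappa_0}$ (resp.\ $I_{\kappa_0}$). Set $D:=\mathrm{Diam}(G)$, pick $x,y\in V$ with $d(x,y)=D$, and fix a shortest path $x=x_0,x_1,\ldots,x_D=y$. Write $\mu^\alpha_v$ for the distribution of the $\alpha$-lazy random walk started at $v$, which has mass $\alpha$ at $v$ and total mass $1-\alpha$ uniformly spread on $N(v)$.

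First I would derive two matching bounds on $W_1(\mu^\alpha_{x_0},\mu^\alpha_{x_D})$. The upper bound is the telescoping estimate
\begin{equation*}
W_1(\mu^\alpha_{x_0},\mu^\alpha_{x_D}) \;\leq\; \sum_{i=0}^{D-1} W_1(\mu^\alpha_{x_i},\mu^\alpha_{x_{i+1}}) \;\leq\; \sum_{i=0}^{D-1}\bigl(1-\kappa_\alpha(x_i,x_{i+1})\bigr),
\end{equation*}
which is immediate from the triangle inequality for $W_1$ and the definition of $\kappa_\alpha$. For the lower bound I would test the Kantorovich dual against the $1$-Lipschitz function $f(z)=d(x_0,z)$: since $\mu^\alpha_{x_D}$ is supported on $\{x_D\}\cup N(x_D)$ (whose vertices all lie at distance $\geq D-1$ from $x_0$) and $\mu^\alpha_{x_0}$ puts mass $\alpha$ at $x_0$ and $1-\alpha$ on neighbors at distance exactly $1$, a short computation yields $W_1(\mu^\alpha_{x_0},\mu^\alpha_{x_D}) \geq D-2(1-\alpha)$. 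Combining gives the key inequality
\begin{equation*}
\sum_{i=0}^{D-1}\kappa_\alpha(x_i,x_{i+1}) \;\leq\; 2(1-\alpha).
\end{equation*}

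To extract the integral curvature quantity I would then use the pointwise decomposition $\kappa_\alpha(x_i,x_{i+1}) \geq (1-\alpha)\kappa_0 - \bigl[(1-\alpha)\kappa_0-\kappa_\alpha(x_i,x_{i+1})\bigr]_+$, sum over $i$, and divide by $(1-\alpha)$, obtaining
\begin{equation*}
D\kappa_0 \;\leq\; 2 + \frac{1}{1-\alpha}\sum_{i=0}^{D-1}\bigl[(1-\alpha)\kappa_0-\kappa_\alpha(x_i,x_{i+1})\bigr]_+.
\end{equation*}
By the definition of $I^\alpha_{\kappa_0}$ in Section~\ref{sec_intcurv} (which is set up to majorize such path-sums of one-sided deficits), the remaining sum is bounded by $I^\alpha_{\kappa_0}$, and using $D\in\mathbb{N}$ to take the floor delivers \eqref{alpha_diam_bound}. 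Inequality \eqref{LLY_diam_bound} follows by the same scheme using instead the rescaled limit inequality $\sum_{i=0}^{D-1}\kappa_{LLY}(x_i,x_{i+1})\leq 2$ (which is obtained by dividing the $\alpha$-estimate above by $(1-\alpha)$ and sending $\alpha\to 1^-$ via $\kappa_\alpha(x_i,x_{i+1})/(1-\alpha)\to\kappa_{LLY}(x_i,x_{i+1})$) and the analogous splitting $\kappa_{LLY}\geq \kappa_0-[\kappa_0-\kappa_{LLY}]_+$.

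The step I expect to be most delicate is not the Wasserstein bookkeeping but rather the interface with the definition of $I^\alpha_{\kappa_0}$ and $I_{\kappa_0}$: my argument only uses a one-sided deficit sum along a single geodesic, so I need to be sure this sum is genuinely dominated by the integral quantity as defined (and that $I^\alpha_{\kappa_0}/(1-\alpha)\to I_{\kappa_0}$ as $\alpha\to 1^-$, so that the $\alpha$-bound passes cleanly to the LLY-bound in the limit). Everything else is a direct translation of the Ollivier--Lin--Lu--Yau argument with an $[\,\cdot\,]_+$ correction term carried through.
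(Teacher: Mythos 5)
Your proposal is correct and follows essentially the same route as the paper: the triangle inequality for $W$ along a geodesic to get $\sum_i \kappa_\alpha(x_{i-1},x_i)\leq 2(1-\alpha)$ (the paper packages this as Lemma~\ref{keylemma} combined with \eqref{LLY_lemma2.2}), the splitting $\kappa_\alpha \geq (1-\alpha)\kappa_0-\rho^\alpha_{\kappa_0}$ with the path-sum of deficits dominated by the full edge-sum $I^\alpha_{\kappa_0}$, the floor from integrality, and the limit $\alpha\to 1^-$ for \eqref{LLY_diam_bound}. The only cosmetic difference is that you re-derive the bound $W(m^\alpha_x,m^\alpha_y)\geq d(x,y)-2(1-\alpha)$ via the dual test function $f(z)=d(x_0,z)$ instead of citing \cite{LinLuYau}~Lemma~2.2, and your worry about the interface with the definition of $I^\alpha_{\kappa_0}$ is unfounded since the deficits are nonnegative and the geodesic's edges form a subset of $E$.
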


We show in Example~\ref{example_sharp_diam} that this diameter estimate is sharp for Path graphs, which are graphs where $\displaystyle\min_{xy\in E}\ \kappa_{LLY}(x,y) =0$ and therefore Theorem~\ref{LLY_BonnetMyers} would not apply.

Our second result is a Moore-type estimate on the number of vertices $n=|V|$ of a graph with prescribed diameter $D$ and maximum degree $d_M$ (see \eqref{Moore} for the original estimate  Moore was interested in, which can be found in \cite{HoffmanSingleton}).  Our estimate generalizes Theorem \ref{LLY_Moore} to integral curvature conditions, allowing our graphs to not be positively curved.

\begin{theorem}\label{Moore_intcurv}
Let $d_M$ be the maximum degree of $G$, and suppose that ${\rm Diam}(G)= D$. For any $\kappa_0\in \mathbb{R}$, the number of vertices $n=|V|$ of $G$ is at most

\begin{equation}\label{Moore_intcurv_bound_diam}
    n\leq 1+ \sum_{k=1}^{D} (d_M)^k \prod_{i=1}^{k-1}\left[1+ \frac{I_{\kappa_0}-i\kappa_0}{2}\right].
\end{equation}
Moreover, if $\kappa_0>0$,  we have
\begin{equation}\label{Moore_intcurv_bound}
    n\leq 1+ \sum_{k=1}^{\left\lfloor \frac{2+I_{\kappa_0}}{\kappa_0}\right\rfloor} (d_M)^k \prod_{i=1}^{k-1}\left[1+ \frac{I_{\kappa_0}-i\kappa_0}{2}\right].
\end{equation}
\end{theorem}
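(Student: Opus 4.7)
The plan is to fix an arbitrary basepoint $x_0 \in V$ and stratify the vertex set by the distance spheres $S_k := \{y \in V : d(x_0,y) = k\}$, $k = 0, 1, \ldots, D$. Writing $n_k := |S_k|$, we have $n_0 = 1$, so that $n = 1 + \sum_{k=1}^{D} n_k$. The Moore-type bound \eqref{Moore_intcurv_bound_diam} will follow by induction from the base case $n_1 \leq d_M$ (immediate from the degree bound) together with the recursive inequality
\[n_k \;\leq\; n_{k-1}\cdot d_M \cdot \left[1 + \frac{I_{\kappa_0} - (k-1)\kappa_0}{2}\right], \qquad k \geq 2,\]
which is the single nontrivial ingredient to establish. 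Iterating this recursion then yields \eqref{Moore_intcurv_bound_diam} directly.

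To obtain the recursion I would adapt the argument used by Lin-Lu-Yau in their positively-curved Moore-type bound (Theorem~\ref{LLY_Moore}). Their key step exploits the fact that for each edge $xy$ with $x \in S_{k-2}$ and $y \in S_{k-1}$, the Lin-Lu-Yau curvature $\kappa_{LLY}(x,y)$ controls---via the definition of Wasserstein distance applied to the $1$-Lipschitz function $z \mapsto d(x_0,z)$---the fraction of the mass of the lazy random walks at $x$ and $y$ that can move outward toward $S_{k-1}$ and $S_k$, respectively. Summing the resulting per-edge inequalities across the edges between consecutive spheres and telescoping along a geodesic from $x_0$ to $S_{k-1}$ produces, in the positively-curved case, the pointwise recursion $n_k \leq n_{k-1}\cdot d_M\cdot [1 - (k-1)\kappa_0/2]$, which is exactly \eqref{Moore_intcurv_bound_diam} with $I_{\kappa_0} = 0$.

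The integral-curvature refinement then proceeds by replacing the pointwise hypothesis $\kappa_{LLY}(x,y) \geq \kappa_0$ inside this summed estimate with the identity
\[\kappa_{LLY}(x,y) = \kappa_0 - \bigl(\kappa_0 - \kappa_{LLY}(x,y)\bigr)_+ + \bigl(\kappa_{LLY}(x,y)-\kappa_0\bigr)_+,\]
dropping the last (nonnegative) term and aggregating the pointwise defects across the relevant edges. By Definition~\ref{integralcurv_def}, this aggregated defect is bounded by $I_{\kappa_0}$ times an appropriate normalization, producing the additive $I_{\kappa_0}/2$ term inside the brackets and hence \eqref{Moore_intcurv_bound_diam}. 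The sharper bound \eqref{Moore_intcurv_bound} is then immediate by substituting the integral Bonnet-Myers estimate \eqref{LLY_diam_bound} of Theorem~\ref{BonnetMyers_intcurv} in place of $D$ when $\kappa_0 > 0$, which truncates the summation at $k = \lfloor (2+I_{\kappa_0})/\kappa_0 \rfloor$.

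The main obstacle is arranging the per-edge Lin-Lu-Yau argument and the normalization of $I_{\kappa_0}$ so that the defect sum produces $I_{\kappa_0}$ with the correct coefficient of $1/2$ and without an extra factor of $k$ or $n_{k-1}$ appearing. Depending on whether $I_{\kappa_0}$ is defined in Section~\ref{sec_intcurv} as a sum, a supremum, or a weighted average of the pointwise defects $(\kappa_0 - \kappa_{LLY}(e))_+$ over the edges of $G$, one may need a specific choice of summation weights (e.g.\ inverse degrees, or counts of edges between consecutive spheres) and a careful Jensen- or Cauchy-Schwarz-type step to convert per-edge estimates into the stated global recursion. Verifying this compatibility, together with checking that the empty-product convention correctly recovers the $k=1$ base case, is where the technical care is needed.
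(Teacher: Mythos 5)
Your overall architecture matches the paper's: stratify by distance spheres from a basepoint, prove a multiplicative growth bound on $|S_k|$, iterate, and truncate the sum at $\lfloor (2+I_{\kappa_0})/\kappa_0\rfloor$ via Theorem~\ref{BonnetMyers_intcurv} when $\kappa_0>0$. The base case, the empty-product convention, and the defect identity $\kappa_{LLY}(x,y)\geq \kappa_0-\rho_{\kappa_0}(x,y)$ are all handled correctly. However, the one nontrivial ingredient --- the recursion $n_k\leq n_{k-1}\,d_M\,[1+(I_{\kappa_0}-(k-1)\kappa_0)/2]$ --- is left as a plan rather than proved, and the mechanism you sketch for it (summing per-edge Wasserstein inequalities across the edges joining consecutive spheres, then telescoping, possibly with a Jensen/Cauchy--Schwarz step and a choice of weights) is not the one that delivers the stated constant. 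That is a genuine gap: as written, you cannot verify that the aggregated defect enters each bracket as exactly $I_{\kappa_0}/2$ with no extra factor of $k$ or $n_{k-1}$, and you explicitly acknowledge this.

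The paper closes this step with two per-\emph{pair} (not per-edge) inequalities. For each $y\in\Gamma_i(x)$, Lemma~\ref{keylemma} applied to the pair $(x,y)$ at distance $i$ gives $\kappa_{LLY}(x,y)\geq \kappa_0-I_{\kappa_0}/i$; the telescoping along a geodesic happens entirely inside that lemma, and the sum of defects along the geodesic is bounded wholesale by the global quantity $I_{\kappa_0}=\sum_{uv\in E}\rho_{\kappa_0}(u,v)$, since that sum runs over \emph{all} edges of $G$. This is why no normalization, weighting, or convexity argument is needed: every pair $(x,y)$ simply inherits the full $I_{\kappa_0}$. Combining with the upper bound of Lemma~\ref{LLY_lemma4.4}, $\kappa_{LLY}(x,y)\leq \bigl[1+(|\Gamma_x^-(y)|-|\Gamma_x^+(y)|)/d_y\bigr]/d(x,y)$, and using $d_y-|\Gamma_x^-(y)|\geq |\Gamma_x^+(y)|$, one gets $2+I_{\kappa_0}-i\kappa_0\geq 2|\Gamma_x^+(y)|/d_y$, hence $|\Gamma_x^+(y)|\leq [1+(I_{\kappa_0}-i\kappa_0)/2]\,d_M$ for each $y$ individually; summing over $y\in\Gamma_i(x)$ gives the recursion with no loss. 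So your plan is recoverable, but only after replacing the edge-by-edge aggregation between spheres by the pair-wise application of Lemma~\ref{keylemma} together with Lemma~\ref{LLY_lemma4.4}, which is precisely where the factor $1/2$ and the term $i\kappa_0$ come from.
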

\begin{remark}
    Note that for any $i<{\rm Diam}(G)$ we have 
    \[1+\frac{I_{\kappa_0}-i\kappa_0}{2}>0.\]
     Since $I_{\kappa_0}\geq 0$, this is clear when $\kappa_0\leq 0$, and it follows from Theorem~\ref{BonnetMyers_intcurv}, \eqref{LLY_diam_bound}, when $\kappa_0>0$.  
\end{remark}


As a corollary of Theorem~\ref{Moore_intcurv}, one can derive an obstruction on the curvature of a hypothetical Moore graph with ${\rm Diam}(G)=2$ and $d_M=57$, whose existence, to the best of our knowledge, is still unknown  (see the survey \cite{Dalfo}, as well as \cite{BannaiIto}, \cite{Damerell}, \cite{FaberKeegan}). In particular, we observe in Remark~\ref{remark_Moore_graphs} that such graph would have to satisfy 
    \[\kappa_0-I_{\kappa_0}\leq \frac{2}{57},\]
and in particular it would need to have at least one edge satisfying 
 \[\kappa_{LLY}(x,y)\leq \frac{2}{57}.\]
 Unfortunately, this is not a new obstruction, since it follows from recent work in \cite{LinYou} that such a graph would have at least one edge with non-positive curvature. We discuss this obstruction in more detail in Remark~\ref{remark_Moore_graphs}.

Our last result is a Lichnerowicz-type estimate in the spirit of Aubry's result \eqref{aubry_eigen}. It is an estimate like  Theorem~\ref{LLY_Lichnerowicz}, where we prove a lower bound for the first non-zero eigenvalue $\lambda_1$ of the Graph Laplacian in terms of the integral curvature of $G$. Here by Graph Laplacian we mean what is referred to sometimes as the normalized random walk Laplacian $\Delta = I-D^{-1}A$, where $I$ is the identity matrix, $A$ denotes the adjacency matrix of $G$, and $D$ is the diagonal matrix of degrees of the vertices in $G$. 

\begin{theorem}\label{Lichnerowicz_intcurv}
For any $\kappa_0>0$ and any $\alpha \in [0,1)$, we have
\begin{equation}\label{Lic_int_alpharicci}
    \lambda_1 \geq \kappa_0 -\frac{I_{\kappa_0}^\alpha}{1-\alpha},
\end{equation}
and
\begin{equation}\label{Lic_int_LLYricci}
    \lambda_1 \geq \kappa_0 - I_{\kappa_0}.
\end{equation}
\end{theorem}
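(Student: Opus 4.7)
The plan is to adapt the Lin--Lu--Yau proof of Theorem~\ref{LLY_Lichnerowicz}, replacing the pointwise curvature lower bound with a path-averaged one that can then be controlled by the integral quantities $I^\alpha_{\kappa_0}$ and $I_{\kappa_0}$. I will first establish \eqref{Lic_int_alpharicci} for the $\alpha$-Ricci curvature and then deduce \eqref{Lic_int_LLYricci} by dividing through by $(1-\alpha)$ and letting $\alpha \to 1^-$, invoking the limit definition of $\kappa_{LLY}$.

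Let $f : V \to \mathbb{R}$ be an eigenfunction of $\Delta$ for $\lambda_1$, normalized so that it is $1$-Lipschitz with respect to the graph distance, and pick $x_0, y_0 \in V$ achieving $f(x_0) - f(y_0) = d(x_0,y_0) =: N$. A direct computation using the eigenvalue equation $\Delta f = \lambda_1 f$ together with the definition of the $\alpha$-lazy random walk measure $m_z^\alpha$ yields
\[ \sum_{w \in V} f(w)\, m_z^\alpha(w) = \bigl[1 - (1-\alpha)\lambda_1\bigr]\, f(z) \qquad \text{for all } z \in V. \]
Fix a shortest path $x_0 = z_0, z_1, \ldots, z_N = y_0$ and apply Kantorovich--Rubinstein duality edge by edge to the $1$-Lipschitz test function $f$:
\[ \bigl[1 - (1-\alpha)\lambda_1\bigr]\,\bigl|f(z_{i-1}) - f(z_i)\bigr| \leq W_1(m_{z_{i-1}}^\alpha, m_{z_i}^\alpha) = 1 - \kappa_\alpha(z_{i-1}, z_i). \]
Summing over $i = 1, \ldots, N$ and using $\sum_i |f(z_{i-1}) - f(z_i)| \geq f(x_0) - f(y_0) = N$ gives
\[ (1-\alpha)\lambda_1 \geq \frac{1}{N}\sum_{i=1}^{N} \kappa_\alpha(z_{i-1},z_i) \geq (1-\alpha)\kappa_0 - \frac{1}{N}\sum_{i=1}^{N} \bigl[(1-\alpha)\kappa_0 - \kappa_\alpha(z_{i-1}, z_i)\bigr]_+. \]

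Invoking Definition~\ref{integralcurv_def} to dominate the path-averaged edge deficit by the global quantity $I^\alpha_{\kappa_0}$ then yields \eqref{Lic_int_alpharicci}, and the second estimate \eqref{Lic_int_LLYricci} follows by dividing through by $(1-\alpha)$ and sending $\alpha \to 1^-$. The main obstacle is precisely this last comparison: depending on whether $I^\alpha_{\kappa_0}$ is defined as a supremum or as a (possibly weighted) sum of edge deficits, one needs either a trivial bound or the observation that a shortest path visits each edge at most once so the partial sum is dominated by the global one. A smaller care-point along the way is checking that $1 - (1-\alpha)\lambda_1 \geq 0$, so the sign in the Kantorovich--Rubinstein step is preserved; this is standard since $\lambda_1 \leq 1$ for the normalized random-walk Laplacian when $\alpha \in [0,1)$.
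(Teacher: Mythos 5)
Your argument is in essence the paper's: both rest on the observation that the averaging operator $M_\alpha=\alpha I+(1-\alpha)D^{-1}A=I-(1-\alpha)\Delta$ sends $1$-Lipschitz functions to $C$-Lipschitz functions with $C=1-\kappa_\alpha$ edgewise (Kantorovich duality), and both convert this into spectral information via $M_\alpha f=[1-(1-\alpha)\lambda_1]f$. The paper packages this as ``the mixing rate of $M_\alpha$ equals $1-(1-\alpha)\lambda_1$'' and quotes Lemma~\ref{keylemma} for the curvature input, whereas you test directly against the eigenfunction along a geodesic and re-derive the relevant path-sum bound; the two are interchangeable. Your ``main obstacle'' is indeed a non-issue: $I^\alpha_{\kappa_0}$ is by Definition~\ref{integralcurv_def} a sum of nonnegative edge deficits over \emph{all} edges, a shortest path uses each edge at most once, and $N\geq 1$, so $\frac{1}{N}\sum_i \rho^\alpha_{\kappa_0}(z_{i-1},z_i)\leq I^\alpha_{\kappa_0}$.

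The one genuine error is the closing claim that $\lambda_1\leq 1$ for the normalized random-walk Laplacian: this is false (for $K_n$ one has $\lambda_1=\frac{n}{n-1}>1$, and for $K_2$ even $\lambda_1=2$), so $1-(1-\alpha)\lambda_1$ can be negative — already for $K_n$ with $\alpha=0$ — and then your chain breaks, because multiplying the inequality $\sum_i|f(z_{i-1})-f(z_i)|\geq N$ by a negative prefactor reverses it. The repair is standard and costs nothing: since $\pm f$ are both $1$-Lipschitz, duality gives $\bigl|1-(1-\alpha)\lambda_1\bigr|\cdot|f(z_{i-1})-f(z_i)|\leq W(m^\alpha_{z_{i-1}},m^\alpha_{z_i})$, and summing yields $\bigl|1-(1-\alpha)\lambda_1\bigr|\leq 1-\frac{1}{N}\sum_i\kappa_\alpha(z_{i-1},z_i)$; as $1-(1-\alpha)\lambda_1\leq\bigl|1-(1-\alpha)\lambda_1\bigr|$, the desired lower bound on $\lambda_1$ follows with no sign hypothesis. (This is implicitly how the paper's mixing-rate formulation avoids the issue.) With that correction the proof is complete.
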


Unfortunately, due to our specific definition of $I_{\kappa_0}$ and $I^\alpha_{\kappa_0}$ and the discrete nature of graphs, although Theorem~\ref{Lichnerowicz_intcurv} recovers Theorem~\ref{LLY_Lichnerowicz} for positively curved graphs, it does not  prove anything beyond Theorem~\ref{LLY_Lichnerowicz}. It is, in a sense, just a reformulation of the result from \cite{LinLuYau} in terms of integral curvature. We discuss this issue in more detail in Remark~\ref{remark_Lichnerowicz}.  Despite this, we decided to include this result here as a potential avenue for future work.  

 To the best of our knowledge, this is the first time integral curvature conditions have been defined for the $\alpha-$Ricci and the Lin-Lu-Yau Ricci curvatures of a graph in the literature, and used to prove results for graphs that are not positively curved. We must point out that there exists previous work studying graphs with non-negative (pointwise) Lin-Lu-Yau Ricci curvature, as opposed to positive, as is the case of \cite{Munch}. Also, in \cite{Munch3}, M\"unch considered a weighted average of the Ollivier Ricci curvatures, similar in spirit to integral Ricci curvature; in that case, the author established a relationship between the average curvature, the average degree, and the average distance of the graph. In the context of the Bakry-\'Emery Ricci curvature for graphs, which is not equivalent to Lin-Lu-Yau Ricci curvature, M\"unch-Rose \cite{MunchRose}  considered a Kato-type curvature condition, which is more general than integral curvature assumptions. The authors of \cite{HuaMunch}, \cite{LMPR}, and \cite{Munch2} also studied graphs that were only non-negatively curved outside an exceptional set. On a separate note, the question of which graphs satisfy equality for Theorem~\ref{LLY_BonnetMyers} and Theorem~\ref{LLY_Lichnerowicz} was explored in \cite{CKKLMP}, who showed that in the discrete setting there is no rigidity, as many graphs achieve the optimal bounds.

 This article has been structured in the following way. In Section~\ref{sec_Background}, we introduce all the necessary background, including the definitions of Ollivier and Lin-Lu-Yau Ricci curvature, as well as the statements of the theorems we are generalizing from \cite{LinLuYau}. In Section~\ref{sec_intcurv} we formally define $I_{\kappa_0}$ and $I_{\kappa_0}^\alpha$. Finally, in Section~\ref{sec_mainthm}, we prove our main theorems and provide some examples.

\section*{Acknowledgements}

The author would like to thank Christian Rose and Florentin M\"unch for their valuable feedback on an earlier version of this article. The author would also like to thank Fan Chung, Mark Kempton, Wuchen Li, Linyuan Lu, Zhiyu Wang, and the American Mathematical Society for organizing the \emph{2023 Mathematics Research Community on Ricci Curvatures of Graphs and Applications to Data Science}, which inspired this work.

\section{Background}\label{sec_Background}
In the following, we recall the key definitions needed to define the Ollivier, the $\alpha$-Ricci, and the Lin-Lu-Yau Ricci curvatures. We will follow a similar notation as in \cite{LinLuYau} and \cite{Ollivier}.
 
Given two probability distributions $m_1$ and $m_2$ over $V$, a coupling between $m_1$ and $m_2$ is a mapping $A:V\times V\rightarrow [0,1]$ satisfying 
\begin{equation}\label{marginals}\sum_{y\in V}A(x,y)=m_1(x) \text{ and }\sum_{x\in V}A(x,y)=m_2(y).\end{equation}
We will denote $d(x,y)$ the graph distance $d:V\times V\rightarrow \mathbb{N}$. The transportation distance between $m_1$  and $m_2$ is defined as 
\begin{equation}\label{transp_distance}
    W(m_1,m_2)=\inf_{A}\sum_{x,y\in V}A(x,y)d(x,y),
\end{equation}
where the infimum is taken over all couplings $A$ between $m_1$ and $m_2$. Finding $W(m_1,m_2)$ requires solving a linear optimization problem with constraints given by \eqref{marginals}. By the duality theorem of linear optimization, the transportation distance can be rewritten as 
\begin{equation}\label{transp_distance_dual}
    W(m_1,m_2)=\sup_{f}\sum_{x\in V}f(x)[m_1(x)-m_2(x)],
\end{equation}
where the supremum is taken over all $1$-Lipschitz functions $f:V\rightarrow \mathbb{R}$. Let $(m_x)_{x\in V}$ be a family of probability measures over $V$ indexed by $V$. The transportation distance is the key in defining the Ollivier Ricci curvature $\kappa_{Oll}$
\begin{equation}\label{Ollivier_curvature}
    \kappa_{Oll}(x,y):= 1-\frac{W(m_x,m_y)}{d(x,y)}.
\end{equation}
\begin{remark}
    Note that the curvature is defined for any pair of vertices $x,y\in V$, $x\not=y$, not necessarily adjacent.
\end{remark}


Different choices for the measures $(m_x)_{x\in V}$ lead to different notions of Ricci curvature. For instance, the $\alpha$-Ricci curvature can be obtained from the probability distribution of the $\alpha$-lazy random walk as follows. For any $\alpha \in [0,1)$ and any vertex $x\in V$, define
\begin{equation}\label{alpha_lazy}
    m_x^\alpha (v) :=\begin{cases} \alpha & \text{if }v=x,\\ \displaystyle \frac{1-\alpha}{d_x} & \text{if }v\in \Gamma(x),\\ 0&\text{if }v\not \in \Gamma(x),\end{cases}
\end{equation}
where $\Gamma(x) =\{v:\ vx\in E\}$ is the set of vertices adjacent to $x$ and $d_x=|\Gamma(x)|$ is the degree of $x$. Then, the $\alpha$-Ricci curvature is defined as 
\begin{equation}\label{alpha_Ricci_curvature}
\kappa_\alpha(x,y):=1-\frac{W(m_x^\alpha,m_y^\alpha)}{d(x,y)}.   
\end{equation}
In \cite{LinLuYau}, Lin-Lu-Yau showed that for fixed $x,y\in V$ the function 
\[
    h_{xy}(\alpha)=\frac{\kappa_\alpha(x,y)}{1-\alpha}
\]
is increasing in $\alpha$ and bounded. Therefore, one can define the Lin-Lu-Yau Ricci curvature as the limit
\begin{equation}\label{LinLuYau_curvature}
\kappa_{LLY}(x,y):=\lim_{\alpha\rightarrow 1^-} \frac{\kappa_\alpha(x,y)}{1-\alpha}.\end{equation}
One of the advantages of the Lin-Lu-Yau Ricci curvature is that it behaves well with the Cartesian product of graphs (see \cite{LinLuYau}~Theorem 3.1). Moreover, the authors are able to prove the following three results for graphs with positive Lin-Lu-Yau Ricci curvature, which we will extend to integral curvature assumptions in this article.

\begin{theorem}[Bonnet-Myers-type estimate, \cite{LinLuYau}~Thm 4.1]\label{LLY_BonnetMyers}
    For any $x,y\in V$, if $\kappa_{LLY}(x,y)>0$, then
    \begin{equation}\label{local_diameter_estimate}
        d(x,y)\leq \left\lfloor\frac{2}{\kappa_{LLY}(x,y)}\right\rfloor.
    \end{equation}
    Moreover, if for any edge $xy\in E$ we have that $\kappa_{LLY}(x,y)\geq \kappa_0>0$, then the diameter of $G$ is bounded as
    \begin{equation}\label{global_diameter_estimate}
        {\rm diam}(G) \leq \left\lfloor\frac{2}{\kappa_0}\right\rfloor.
    \end{equation}
\end{theorem}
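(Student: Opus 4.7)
The plan is to establish the local estimate \eqref{local_diameter_estimate} via Kantorovich duality, and then to bootstrap to the global estimate \eqref{global_diameter_estimate} through a geodesic subadditivity argument. Concretely, for the local bound I would feed the 1-Lipschitz test function $f(z) = d(z,y)$ into the dual formulation \eqref{transp_distance_dual} of $W(m_x^\alpha, m_y^\alpha)$. Evaluating on the $\alpha$-lazy distributions \eqref{alpha_lazy}, the $y$-side contributes $\sum_z f(z)\,m_y^\alpha(z) = 1-\alpha$ since every neighbor of $y$ lies at distance $1$ from $y$, while on the $x$-side the triangle inequality $d(v,y) \geq d(x,y)-1$ for $v \in \Gamma(x)$ supplies a lower bound on $\sum_z f(z)\,m_x^\alpha(z)$. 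Combining these should yield
\[W(m_x^\alpha, m_y^\alpha) \;\geq\; d(x,y) - 2(1-\alpha).\]
Plugging this into \eqref{alpha_Ricci_curvature}, dividing by $1-\alpha$, and letting $\alpha \to 1^-$ gives $d(x,y)\,\kappa_{LLY}(x,y) \leq 2$, from which \eqref{local_diameter_estimate} follows because $d(x,y)$ is a positive integer.

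For the global statement, I would propagate the edgewise curvature hypothesis to arbitrary pairs. Given $x,y$ at graph distance $D$ with a geodesic $x=x_0, x_1, \ldots, x_D=y$, the triangle inequality for the Wasserstein distance yields
\[W(m_x^\alpha, m_y^\alpha) \;\leq\; \sum_{i=0}^{D-1} W(m_{x_i}^\alpha, m_{x_{i+1}}^\alpha) = \sum_{i=0}^{D-1}\bigl(1 - \kappa_\alpha(x_i, x_{i+1})\bigr).\]
Rearranging gives $\kappa_\alpha(x,y) \geq \tfrac{1}{D}\sum_i \kappa_\alpha(x_i,x_{i+1})$; dividing by $1-\alpha$, letting $\alpha\to 1^-$, and invoking the monotonicity of $h_{xy}(\alpha)$ from \cite{LinLuYau} produces the convexity-type inequality $\kappa_{LLY}(x,y) \geq \tfrac{1}{D}\sum_i \kappa_{LLY}(x_i,x_{i+1}) \geq \kappa_0$. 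Applying the local bound then delivers \eqref{global_diameter_estimate} after taking the maximum over all $x,y \in V$.

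The main obstacle is making the constant $2$ in the key estimate of Step 1 genuinely sharp: any slack in the lower bound on $\sum_{v\in\Gamma(x)} d(v,y)$ beyond $d_x(d(x,y)-1)$ would degrade the final constant and prevent recovery of the classical Bonnet-Myers factor. A secondary technical concern is interchanging the finite sum and the $\alpha \to 1^-$ limit in Step 2, but this is handled cleanly by the existence and monotonicity of $h_{xy}(\alpha) \to \kappa_{LLY}(x,y)$ already established in \cite{LinLuYau}. Once those two points are pinned down, the rest of the argument reduces to straightforward algebra with $W(m_x^\alpha, m_y^\alpha)$.
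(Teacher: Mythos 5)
Your proposal is correct and follows essentially the same route as the paper's machinery: your dual-formulation computation with $f(z)=d(z,y)$ reproduces the bound $\kappa_\alpha(x,y)\leq (1-\alpha)\tfrac{2}{d(x,y)}$ (the paper's Lemma quoted as \eqref{LLY_lemma2.2}), and your Wasserstein triangle-inequality step along a geodesic is exactly the first inequality of the paper's Lemma~\ref{keylemma}, which is how the paper derives its generalization Theorem~\ref{BonnetMyers_intcurv}. Both computations check out, so combining them and taking the floor yields the stated estimates with no gaps.
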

\begin{theorem}[Moore-type bound, \cite{LinLuYau}~Thm 4.3]\label{LLY_Moore}
    Suppose that for any $xy\in E$ we have $\kappa_{LLY}(x,y)\geq \kappa_0>0$. Let $d_{M}$ be the maximum degree of $G$. Then if $n=|V|$ denotes the number of vertices, we have
    \begin{equation}
        n\leq 1+ \sum_{k=1}^{\lfloor 2/\kappa_0 \rfloor}(d_{M})^k \prod_{i=1}^{k-1}\left( 1-i\frac{\kappa_0}{2} \right).
    \end{equation}
\end{theorem}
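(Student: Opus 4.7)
The plan is to fix a base vertex $v_0 \in V$ and bound $|V|$ by counting the distance spheres $S_k(v_0) := \{x \in V : d(x,v_0) = k\}$. By Theorem~\ref{LLY_BonnetMyers}, $S_k(v_0) = \emptyset$ for $k > D_0 := \lfloor 2/\kappa_0\rfloor$, so $n = 1 + \sum_{k=1}^{D_0}|S_k(v_0)|$ and it suffices to estimate each $|S_k(v_0)|$. For $u \in S_k(v_0)$ let $d_u^+, d_u^0, d_u^-$ denote the number of neighbors of $u$ lying in $S_{k+1}(v_0), S_k(v_0), S_{k-1}(v_0)$ respectively, and introduce the averaged quantity $\phi(u) := (2d_u^+ + d_u^0)/d_u$. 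Since every vertex of $S_{k+1}(v_0)$ has at least one neighbor in $S_k(v_0)$, we have $|S_{k+1}(v_0)| \leq \sum_{u \in S_k(v_0)} d_u^+$.

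The key step I would establish is the following \emph{monotonicity lemma}: whenever $u \in S_k(v_0)$ and $w \in S_{k-1}(v_0)$ are adjacent, one has $\phi(u) \leq \phi(w) - \kappa_0$. To prove it I would apply the dual formulation \eqref{transp_distance_dual} with the $1$-Lipschitz test function $f(x) = d(x,v_0)$ to the pair $(m_u^\alpha, m_w^\alpha)$. A direct bookkeeping gives $\sum_x f(x)\bigl[m_u^\alpha(x) - m_w^\alpha(x)\bigr] = \alpha + (1-\alpha)\bigl(1 + \phi(u) - \phi(w)\bigr)$, while by definition $W(m_u^\alpha,m_w^\alpha) = 1 - (1-\alpha)\, h_{uw}(\alpha)$. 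Combining $\sum_x f[m_u^\alpha - m_w^\alpha] \leq W(m_u^\alpha,m_w^\alpha)$ and dividing through by $1-\alpha$ yields $\phi(u) - \phi(w) \leq -h_{uw}(\alpha)$; letting $\alpha \to 1^{-}$ and using $\kappa_{LLY}(u,w) \geq \kappa_0$ completes the lemma.

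With the lemma in hand, iteration along any geodesic from $v_0$ to $u \in S_k(v_0)$, starting from the trivial identity $\phi(v_0) = 2$ (every neighbor of $v_0$ lies in $S_1(v_0)$), produces $\phi(u) \leq 2 - k\kappa_0$, and hence $d_u^+ \leq d_u\, \phi(u)/2 \leq d_M(1 - k\kappa_0/2)$. Summing over $u \in S_k(v_0)$ gives the one-step recursion $|S_{k+1}(v_0)| \leq d_M(1 - k\kappa_0/2)\,|S_k(v_0)|$, and since $|S_1(v_0)| \leq d_{v_0} \leq d_M$, an induction on $k$ gives $|S_k(v_0)| \leq d_M^k\prod_{i=1}^{k-1}(1 - i\kappa_0/2)$. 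Summing over $1 \leq k \leq D_0$ yields the stated Moore-type bound.

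The main obstacle, and the step I would spend the most care on, is the monotonicity lemma: the Lin-Lu-Yau curvature is defined as a limit in $\alpha$ rather than as a constraint on any finite-$\alpha$ Wasserstein distance, so the dual inequality must be carried with its $\alpha$ dependence all the way through the computation, and the monotonicity of $h_{uw}$ in $\alpha$ is what lets one pass cleanly to $\alpha \to 1^{-}$ at the end. A smaller but equally essential choice is using the distance function $f(\cdot) = d(\cdot,v_0)$ as the test function, since this is precisely what makes the edge curvature bound on $(u,w)$ translate into an additive decrease of $\phi$ by $\kappa_0$ along each step of a geodesic issuing from $v_0$, which is exactly the recursion driving the sphere-counting induction.
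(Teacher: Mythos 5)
Your proof is correct, and it follows the same outer skeleton as the argument the paper uses for its generalization Theorem~\ref{Moore_intcurv} (which reduces to this statement when $I_{\kappa_0}=0$): count distance spheres about a base vertex, bound the outward degree of each vertex at distance $k$ by $d_M(1-k\kappa_0/2)$, and induct. Where you differ is in how that outward-degree bound is obtained. The paper combines two lemmas: the curvature lower bound for \emph{non-adjacent} pairs (Lemma~\ref{keylemma}, proved via the triangle inequality for $W$ along a geodesic) and the quoted upper bound of Lemma~\ref{LLY_lemma4.4}, whose combination gives exactly $1+\frac{|\Gamma_x^-(y)|-|\Gamma_x^+(y)|}{d_y}\leq 2-k\kappa_0$ for $y$ at distance $k$. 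You instead prove a self-contained one-step monotonicity lemma, $\phi(u)\leq\phi(w)-\kappa_0$ for adjacent $u,w$ in consecutive spheres, by testing the dual formulation \eqref{transp_distance_dual} with $f=d(\cdot,v_0)$, and then telescope along a geodesic. Since $\phi(y)=2-\bigl(1+\frac{|\Gamma_x^-(y)|-|\Gamma_x^+(y)|}{d_y}\bigr)$, your telescoped conclusion $\phi(u)\leq 2-k\kappa_0$ is identical to what the paper extracts from its two lemmas; the difference is one of bookkeeping. Your route buys self-containedness (you never need Lemma~\ref{LLY_lemma4.4} or the curvature of non-adjacent pairs as black boxes, and the duality computation with the $\alpha$-dependence carried through and the limit $\alpha\to 1^-$ is handled correctly), at the cost of reproving edge-by-edge what the quoted lemma packages for a general pair; the paper's route, by isolating Lemma~\ref{keylemma}, is the one that extends directly to the integral-curvature version, where the per-edge decrement $\kappa_0$ must be replaced by $\kappa_0-I_{\kappa_0}/d(x,y)$ rather than telescoped.
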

\begin{theorem}[Lichnerowicz-type estimate, \cite{LinLuYau}~Thm 4.2]\label{LLY_Lichnerowicz}
    Let $\lambda_1>0$ denote the first non-zero eigenvalue of the positive graph Laplacian $\Delta = I-D^{-1}A$, where $A$ denotes the adjacency matrix of  $G$ and $D$ is the diagonal matrix of degrees. Suppose that for any edge $xy\in E$, we have $\kappa_{LLY}(x,y)\geq \kappa_0>0$. Then 
    \begin{equation}\label{Lichnerowicz_estimate}
        \lambda_1 \geq \kappa_0.
    \end{equation}
\end{theorem}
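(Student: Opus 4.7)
The plan is to follow the standard Kantorovich--duality approach that adapts the manifold Lichnerowicz proof to the discrete setting. Let $f:V\to \mathbb{R}$ be an eigenfunction of $\Delta=I-D^{-1}A$ with eigenvalue $\lambda_1>0$, so that at every vertex $x\in V$,
\[
\frac{1}{d_x}\sum_{v\in\Gamma(x)} f(v) = (1-\lambda_1)\,f(x).
\]
First I would evaluate the integral of $f$ against the $\alpha$-lazy measure $m_x^\alpha$: unpacking \eqref{alpha_lazy} and using the eigenfunction identity above, one gets the clean formula
\[
\sum_{v\in V} f(v)\,m_x^\alpha(v) \;=\; \alpha f(x)+(1-\alpha)(1-\lambda_1)f(x) \;=\; \bigl[1-(1-\alpha)\lambda_1\bigr]f(x).
\]

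Next I would exploit the dual characterization \eqref{transp_distance_dual}. Since $V$ is finite, $f$ has a finite Lipschitz constant $L>0$ with respect to the graph distance $d$, and there exist vertices $x_0\neq y_0$ realizing $L=|f(x_0)-f(y_0)|/d(x_0,y_0)$. Applying the dual formulation of the transportation distance to the $1$-Lipschitz function $f/L$ at the pair $(x_0,y_0)$ and substituting the identity above yields
\[
\bigl[1-(1-\alpha)\lambda_1\bigr]\,\bigl(f(x_0)-f(y_0)\bigr) \;\leq\; L\,W(m_{x_0}^\alpha,m_{y_0}^\alpha).
\]
For $\alpha$ sufficiently close to $1$ the bracket is positive, so dividing by $L\,d(x_0,y_0)>0$ and using the definition \eqref{alpha_Ricci_curvature} gives
\[
1-(1-\alpha)\lambda_1 \;\leq\; 1-\kappa_\alpha(x_0,y_0),
\quad\text{hence}\quad
\frac{\kappa_\alpha(x_0,y_0)}{1-\alpha}\;\leq\;\lambda_1.
\]
Sending $\alpha\to 1^-$ via \eqref{LinLuYau_curvature} produces $\kappa_{LLY}(x_0,y_0)\leq \lambda_1$.

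To conclude, I need to relate $\kappa_{LLY}(x_0,y_0)$, a priori for possibly non-adjacent $x_0,y_0$, to the edge-wise hypothesis $\kappa_{LLY}\geq \kappa_0$. The standard fact here is that an edge-wise LLY bound propagates to arbitrary pairs: if $x_0=z_0,z_1,\ldots,z_m=y_0$ is a geodesic, then using a triangle inequality for $W$ along the path, one obtains $\kappa_{LLY}(x_0,y_0)\geq \kappa_0$. Combining this with the previous inequality gives $\lambda_1\geq \kappa_0$, as desired. The one place that requires genuine care is the step choosing the Lipschitz-maximizing pair and justifying that we can pass to the $\alpha\to 1^-$ limit; the monotonicity and boundedness of $h_{xy}(\alpha)$ from Lin--Lu--Yau handle this, and finiteness of $V$ ensures that the maximizer $(x_0,y_0)$ exists. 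The edge-to-pair propagation of the curvature lower bound is the only nonroutine ingredient, but it is a known property of $\kappa_{LLY}$ and is the main structural fact doing the work.
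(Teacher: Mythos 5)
Your proof is correct and follows essentially the same route as the paper's (which proves the integral-curvature generalization, Theorem~\ref{Lichnerowicz_intcurv}, and specializes to this statement when $I_{\kappa_0}=0$): both rest on the averaging operator $M_\alpha$, the Kantorovich duality \eqref{transp_distance_dual}, the bound on $W(m_x^\alpha,m_y^\alpha)$ obtained by propagating the edge-wise curvature hypothesis along geodesics (the paper's Lemma~\ref{keylemma}), and the identity $M_\alpha=I-(1-\alpha)\Delta$. The only difference is presentational: where the paper argues via the mixing rate of $M_\alpha$, you apply the Lipschitz contraction directly to an eigenfunction at a Lipschitz-extremal pair, which makes the spectral step more explicit and self-contained (and since for a graph metric the Lipschitz constant is always attained on an edge, your final edge-to-pair propagation step could even be omitted by choosing $x_0y_0\in E$).
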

\section{Integral Ricci Curvature}\label{sec_intcurv}
\begin{definition}\label{integralcurv_def}
 For any $\kappa_0\in \mathbb{R}$ and $\alpha\in [0,1)$, let the functions $\rho_{\kappa_0}^\alpha : V\times V\rightarrow \mathbb{R}$ and $\rho_{\kappa_0} : V\times V\rightarrow \mathbb{R}$ be defined as 
 \begin{equation}
     \rho_{\kappa_0}^\alpha (x,y) :=\max \{0,(1-\alpha)\kappa_0-\kappa_\alpha(x,y)\}
 \end{equation}
 and 
 \begin{equation}
     \rho_{\kappa_0} (x,y) :=\max \{0,\kappa_0-\kappa_{LLY}(x,y)\}.
 \end{equation}
 We define the Integral $\alpha$-Ricci curvature $I^\alpha_{\kappa_0}$ and the Integral Lin-Lu-Yau Ricci curvature $I_{\kappa_0}$ to be the following quantities
 \begin{equation}\label{alphaintegralcurv_def}
 I^\alpha_{\kappa_0}:= \displaystyle\sum_{xy\in E}\rho^\alpha_{\kappa_0}(x,y)
 \end{equation}
 and 
  \begin{equation}\label{LLYintegralcurv_def}
 I_{\kappa_0}:= \sum_{xy\in E}\rho_{\kappa_0}(x,y).
 \end{equation}
\end{definition}

\begin{remark}
    The functions $\rho_{\kappa_0}^\alpha(x,y)$ and $\rho_{\kappa_0}(x,y)$ measure the amount by which the inequalities $$\kappa_\alpha(x,y)\geq (1-\alpha) \kappa_0\ \ \ \text{ and }\ \ \ \kappa_{LLY}(x,y)\geq \kappa_0,$$ respectively, fail to be satisfied. That is to say, the inequalities are satisfied if and only if $\rho_{\kappa_0}^\alpha (x,y) =0$ or $\rho_{\kappa_0} (x,y) =0$, respectively. Otherwise, $\rho_{\kappa_0}^\alpha(x,y)$ measures the amount of $\alpha$-Ricci curvature between the vertices $x$ and $y$ below the threshold $(1-\alpha)\kappa_0$, and similarly $\rho_{\kappa_0}(x,y)$ measures the amount of Lin-Lu-Yau Ricci curvature  between $x$ and $y$ below the threshold $\kappa_0$. 
\end{remark}
\begin{remark}
    The  quantity $I^{\alpha}_{\kappa_0}$ measures the total amount of $\alpha$-Ricci curvature below the threshold $(1-\alpha)\kappa_0$ over all the edges $xy\in E$ in the graph. By definition it follows that $I^\alpha_{\kappa_0}\geq 0$, and we have that $$I^\alpha_{\kappa_0}=0\iff \kappa_{\alpha}(x,y)\geq (1-\alpha)\kappa_0 \text{\ for all } xy\in E.$$ 

Similarly, the quantity $I_{\kappa_0}$ measures the total amount of Lin-Lu-Yau Ricci curvature below the threshold $\kappa_0$ over all the edges $xy\in E$ in the graph. By definition it follows that $I_{\kappa_0}\geq 0$, and we have that $$I_{\kappa_0}=0\iff \kappa_{LLY}(x,y)\geq \kappa_0 \text{\ for all } xy\in E.$$ 
\end{remark}

\begin{remark}
    Note that the definitions above have been chosen so that
    \[\lim_{\alpha\rightarrow 1^-}\frac{I^\alpha_{\kappa_0} }{1-\alpha}= I_{\kappa_0}.\]
\end{remark}


\section{Main Results}\label{sec_mainthm}
The following lemma will be crucial in our subsequent discussions, and can be regarded as one of the key contributions of this article. The first half of the proof of the lemma follows the proof of \cite{LinLuYau}~Lemma 2.3, but afterwards our proofs differ. We will include the full proof for convenience.
\begin{lemma}\label{keylemma}
    Let $x,y\in V$ be two (not necessarily adjacent) vertices, and consider a minimizing path $P$ from $x$ to $y$, $P=(x_0, x_1, x_2, \ldots, x_{d(x,y)})$, where $x_0=x$, $x_{d(x,y)}=y$, and $x_{i-1}$ is adjacent to $x_i$.  For any $\kappa_0\in \mathbb{R}$ and any $\alpha\in [0,1)$, we have 
    \begin{equation}\label{lemma_alpha}
        \kappa_{\alpha}(x,y)\geq \frac{\displaystyle\sum_{i=1}^{d(x,y)}\kappa_\alpha(x_{i-1},x_i)}{d(x,y)}\geq (1-\alpha)\kappa_0-\frac{I^\alpha_{\kappa_0}}{d(x,y)}.
    \end{equation}
    and
    \begin{equation}\label{lemma_LLY}
        \kappa_{LLY}(x,y) \geq \frac{\displaystyle\sum_{i=1}^{d(x,y)}\kappa_{LLY}(x_{i-1},x_i)}{d(x,y)} \geq \kappa_0 -\frac{I_{\kappa_0}}{d(x,y)}.
    \end{equation}
\end{lemma}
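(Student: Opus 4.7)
The plan is to prove \eqref{lemma_alpha} first and then deduce \eqref{lemma_LLY} by dividing through by $1-\alpha$ and letting $\alpha\to 1^-$, invoking the last remark of Section~\ref{sec_intcurv} that $\lim_{\alpha\to 1^-} I^\alpha_{\kappa_0}/(1-\alpha) = I_{\kappa_0}$ together with the definition \eqref{LinLuYau_curvature} of $\kappa_{LLY}$. The first (averaging) inequality in each line is the \cite{LinLuYau}~Lemma~2.3 argument, while the second (the one involving integral curvature) is the genuinely new content.

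For the first inequality of \eqref{lemma_alpha}, I would use the triangle inequality for the transportation distance $W$ along the minimizing path $P$, namely
\[
W(m_x^\alpha, m_y^\alpha) \leq \sum_{i=1}^{d(x,y)} W(m_{x_{i-1}}^\alpha, m_{x_i}^\alpha),
\]
which follows from gluing optimal couplings along consecutive edges. Because each pair $(x_{i-1},x_i)$ is adjacent, $d(x_{i-1},x_i)=1$ and \eqref{alpha_Ricci_curvature} gives $W(m_{x_{i-1}}^\alpha,m_{x_i}^\alpha) = 1-\kappa_\alpha(x_{i-1},x_i)$. Substituting, dividing by $d(x,y)$, and applying \eqref{alpha_Ricci_curvature} once more to the left-hand side yields the averaging inequality.

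For the second inequality of \eqref{lemma_alpha}, I would invoke the pointwise bound
\[
\kappa_\alpha(x',y') \geq (1-\alpha)\kappa_0 - \rho^\alpha_{\kappa_0}(x',y'),
\]
valid for every pair $x',y'\in V$ by direct inspection of the two cases in Definition~\ref{integralcurv_def} (trivially tight when $\kappa_\alpha < (1-\alpha)\kappa_0$ and trivially satisfied otherwise). Summing along the edges of $P$ gives
\[
\sum_{i=1}^{d(x,y)}\kappa_\alpha(x_{i-1},x_i) \geq d(x,y)(1-\alpha)\kappa_0 - \sum_{i=1}^{d(x,y)}\rho^\alpha_{\kappa_0}(x_{i-1},x_i).
\]
Because $P$ is minimizing, its edges are distinct members of $E$, and since $\rho^\alpha_{\kappa_0}\geq 0$ the last sum is at most $\sum_{xy\in E}\rho^\alpha_{\kappa_0}(x,y) = I^\alpha_{\kappa_0}$. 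Dividing by $d(x,y)$ closes the chain in \eqref{lemma_alpha}.

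The passage from \eqref{lemma_alpha} to \eqref{lemma_LLY} is the only genuine technical point I expect: dividing the $\alpha$-version by $1-\alpha$ and sending $\alpha\to 1^-$, one recovers $\kappa_{LLY}$ for each finitely many terms by \eqref{LinLuYau_curvature}, and the scaling remark gives $I^\alpha_{\kappa_0}/(1-\alpha)\to I_{\kappa_0}$; the monotonicity of $h_{xy}(\alpha)$ established in \cite{LinLuYau} justifies the termwise passage to the limit in the finite sums. Alternatively, since both ingredients (the triangle inequality for $W$ and the pointwise lower bound built into $\rho_{\kappa_0}$) carry over unchanged, one can simply replay the two-step argument directly with $\kappa_{LLY}$ and $\rho_{\kappa_0}$ in place of $\kappa_\alpha$ and $\rho^\alpha_{\kappa_0}$.
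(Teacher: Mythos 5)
Your proposal is correct and follows essentially the same route as the paper: the triangle inequality for $W$ along the minimizing path gives the averaging inequality, the pointwise bound $\kappa_\alpha(x_{i-1},x_i)\geq(1-\alpha)\kappa_0-\rho^\alpha_{\kappa_0}(x_{i-1},x_i)$ combined with extending the sum of the nonnegative $\rho^\alpha_{\kappa_0}$ from the path's edges to all of $E$ gives the integral-curvature inequality, and \eqref{lemma_LLY} follows by dividing by $1-\alpha$ and letting $\alpha\to 1^-$. Your explicit observation that the edges of a minimizing path are distinct (so the path sum is genuinely dominated by $I^\alpha_{\kappa_0}$) is a detail the paper leaves implicit, and the appeal to monotonicity of $h_{xy}$ is unnecessary for a finite sum, but neither point changes the argument.
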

\begin{proof}
By definition of $\kappa_\alpha$ and using the triangle inequality of the transportation distance, we have
\begin{align*}
    \kappa_\alpha(x,y) &= 1-\frac{W(m_x^\alpha,m_y^\alpha)}{d(x,y)} \\
    &\geq 1-\frac{\displaystyle\sum_{i=1}^{d(x,y)} W(m^\alpha_{x_{i-1}},m^\alpha_{x_i})}{d(x,y)}\\
    &=\frac{\displaystyle\sum_{i=1}^{d(x,y)}\kappa_{\alpha}(x_{i-1},x_i)}{d(x,y)},
\end{align*}
which gives us the first inequality in~\eqref{lemma_alpha}. By adding and subtracting $(1-\alpha)\kappa_0$ in each summand, and using the fact that $$\rho_{\kappa_0}^\alpha(x_{i-1},x_i) \geq (1-\alpha)\kappa_0 - \kappa_\alpha(x_{i-1},x_i),$$ we get
\begin{align*}
    \frac{\displaystyle\sum_{i=1}^{d(x,y)}\kappa_{\alpha}(x_{i-1},x_i)}{d(x,y)} &=     \frac{\displaystyle\sum_{i=1}^{d(x,y)}\left[\kappa_{\alpha}(x_{i-1},x_i) - (1-\alpha)\kappa_0\right]}{d(x,y)} + (1-\alpha)\kappa_0\\
    & \geq \frac{\displaystyle\sum_{i=1}^{d(x,y)}-\rho^\alpha_{\kappa_0}(x_{i-1},x_i)}{d(x,y)} + (1-\alpha)\kappa_0\\
    & \geq \frac{\displaystyle\sum_{uv\in E}-\rho^\alpha_{\kappa_0}(u,v)}{d(x,y)} + (1-\alpha)\kappa_0\\
    &=(1-\alpha)\kappa_0-\frac{I^\alpha_{\kappa_0}}{d(x,y)},
\end{align*}
which proves~\eqref{lemma_alpha}. Dividing~\eqref{lemma_alpha} by $(1-\alpha)$ and taking the limit as $\alpha\rightarrow 1^-$ we obtain~\eqref{lemma_LLY}.
\end{proof}
\begin{remark}
    Note that in the case where the curvature is bounded below pointwise (i.e. $I_{\kappa_0}=0$ or $I^\alpha_{\kappa_0}=0$), this lemma recovers Lemma 2.3 from \cite{LinLuYau}.
\end{remark}
\subsection{Bonnet-Myers-type estimate}
In this subsection we will generalize Theorem~\ref{LLY_BonnetMyers} to the integral curvature setting. To that end, we need to recall the following lemma from \cite{LinLuYau}. 

\begin{lemma}[\cite{LinLuYau}~Lemma 2.2]
    For any $\alpha\in [0,1]$ and any two vertices $x$ and $y$, we have
    \begin{equation}\label{LLY_lemma2.2}
        \kappa_\alpha(x,y)\leq (1-\alpha)\frac{2}{d(x,y)}.
    \end{equation}
\end{lemma}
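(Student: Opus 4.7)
The plan is to upper-bound $\kappa_\alpha(x,y)$ by lower-bounding the transportation distance $W(m_x^\alpha, m_y^\alpha)$, and the cleanest route is through the Kantorovich--Rubinstein dual \eqref{transp_distance_dual}, which allows us to produce a lower bound by exhibiting a single $1$-Lipschitz test function. The natural choice is $f(z) := d(z,y)$, which is $1$-Lipschitz with respect to the graph metric $d$ on $V$.

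With this test function, I would compute the two sums $\sum_{z} f(z) m_x^\alpha(z)$ and $\sum_{z} f(z) m_y^\alpha(z)$ separately using the explicit formula \eqref{alpha_lazy} for the $\alpha$-lazy random walk. For the second sum, every neighbor $v$ of $y$ contributes $d(v,y)=1$, so the sum collapses to exactly $1-\alpha$ (the $\alpha$-mass at $y$ itself contributes nothing). For the first sum, the mass $\alpha$ at $x$ contributes $\alpha\, d(x,y)$, while each neighbor $v\in\Gamma(x)$ satisfies $d(v,y)\geq d(x,y)-1$ by the triangle inequality, so the neighbor contribution is at least $(1-\alpha)(d(x,y)-1)$. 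Combining these gives
\begin{equation*}
W(m_x^\alpha,m_y^\alpha) \;\geq\; \sum_{z\in V} f(z)\bigl[m_x^\alpha(z)-m_y^\alpha(z)\bigr] \;\geq\; \alpha\, d(x,y) + (1-\alpha)\bigl(d(x,y)-1\bigr) - (1-\alpha) \;=\; d(x,y) - 2(1-\alpha).
\end{equation*}
Dividing by $d(x,y)$ and rearranging yields the claimed bound $\kappa_\alpha(x,y)\leq (1-\alpha)\cdot \tfrac{2}{d(x,y)}$.

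There is no serious obstacle here; the only thing to double-check is that the test function $f(z)=d(z,y)$ is admissible in \eqref{transp_distance_dual} (it is, since $d$ is a metric and hence $|d(z,y)-d(z',y)|\leq d(z,z')$), and that the triangle inequality used on neighbors of $x$ is tight in the right direction for our lower bound on $W$. Since we are only producing a lower bound on $W$, replacing the actual neighbor distances by $d(x,y)-1$ is exactly what is required. Finally, the case $\alpha=1$ is trivial because both measures are Dirac and $W=d(x,y)$, so the inequality reduces to $0\leq 0$.
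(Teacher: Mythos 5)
Your proof is correct. Note that the paper itself does not prove this lemma; it is imported verbatim from Lin--Lu--Yau (their Lemma 2.2), so there is no in-paper argument to compare against. Your route --- lower-bounding $W(m_x^\alpha,m_y^\alpha)$ via the Kantorovich dual \eqref{transp_distance_dual} with the $1$-Lipschitz test function $f(z)=d(z,y)$, which gives $W(m_x^\alpha,m_y^\alpha)\geq d(x,y)-2(1-\alpha)$ --- is essentially the standard argument from the original reference (which one can equivalently phrase on the primal side: for any coupling $A$, $\sum_{u,v}A(u,v)d(u,v)\geq \sum_{u,v}A(u,v)\bigl[d(x,y)-d(x,u)-d(v,y)\bigr]=d(x,y)-2(1-\alpha)$, since each of $m_x^\alpha$ and $m_y^\alpha$ places mass $1-\alpha$ at distance $1$ from its center). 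All steps check out, including the boundary case $\alpha=1$.
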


Now we are ready to prove Theorem~\ref{BonnetMyers_intcurv}, which we restate below for convenience.
\begin{theorem*}[\bf\ref{BonnetMyers_intcurv}]
    For any $\kappa_0>0$ and $\alpha\in [0,1)$, the diameter of $G$ can be bounded by 
    \begin{equation}\label{alpha_diam_bound}
        {\rm Diam}(G) \leq \left\lfloor\frac{2+ \displaystyle \frac{I^\alpha_{\kappa_0}}{(1-\alpha)}}{\kappa_0}\right\rfloor
    \end{equation}
    and 
    \begin{equation}\label{LLY_diam_bound}
        {\rm Diam}(G) \leq \left\lfloor\frac{2+ I_{\kappa_0}}{\kappa_0}\right\rfloor. 
    \end{equation}
\end{theorem*}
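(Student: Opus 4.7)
The plan is to pick a pair of vertices $x^*,y^*\in V$ realizing the diameter, i.e. $d(x^*,y^*)=\text{Diam}(G)$, and then squeeze $\kappa_\alpha(x^*,y^*)$ (respectively $\kappa_{LLY}(x^*,y^*)$) between an upper bound coming from Lemma~\ref{LLY_lemma2.2} and a lower bound coming from the key Lemma~\ref{keylemma}. Since those two bounds together will force an upper bound on $d(x^*,y^*)$, and since $d(x^*,y^*)$ is an integer, taking the floor gives the stated estimates.

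More concretely, for the first inequality~\eqref{alpha_diam_bound}, I apply Lemma~\ref{LLY_lemma2.2} at the pair $(x^*,y^*)$ to get
\[
\kappa_\alpha(x^*,y^*)\ \leq\ (1-\alpha)\frac{2}{d(x^*,y^*)},
\]
and I apply Lemma~\ref{keylemma} along a minimizing path between $x^*$ and $y^*$ to get
\[
\kappa_\alpha(x^*,y^*)\ \geq\ (1-\alpha)\kappa_0-\frac{I^\alpha_{\kappa_0}}{d(x^*,y^*)}.
\]
Chaining the two inequalities, multiplying through by $d(x^*,y^*)>0$, and dividing by $(1-\alpha)\kappa_0>0$ yields
\[
d(x^*,y^*)\ \leq\ \frac{2+\frac{I^\alpha_{\kappa_0}}{1-\alpha}}{\kappa_0},
\]
and taking the floor (since $d(x^*,y^*)\in\mathbb{N}$) gives~\eqref{alpha_diam_bound}.

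For the second inequality~\eqref{LLY_diam_bound}, the cleanest route is to divide the $\alpha$-Ricci bound just obtained by $(1-\alpha)$ and pass to the limit $\alpha\to 1^-$, using that $\kappa_\alpha(x^*,y^*)/(1-\alpha)\to \kappa_{LLY}(x^*,y^*)$ and, by the final remark after Definition~\ref{integralcurv_def}, $I^\alpha_{\kappa_0}/(1-\alpha)\to I_{\kappa_0}$; the floor on the right-hand side is preserved in the limit because the inner expression depends continuously on $\alpha$ and the bound on the integer $d(x^*,y^*)$ persists. Alternatively, one can argue directly: Lemma~\ref{LLY_lemma2.2} gives $\kappa_{LLY}(x^*,y^*)\leq 2/d(x^*,y^*)$ after dividing by $(1-\alpha)$ and passing to the limit, while Lemma~\ref{keylemma} yields $\kappa_{LLY}(x^*,y^*)\geq \kappa_0-I_{\kappa_0}/d(x^*,y^*)$; combining these and isolating $d(x^*,y^*)$ produces $d(x^*,y^*)\leq (2+I_{\kappa_0})/\kappa_0$, whence the floor gives~\eqref{LLY_diam_bound}.

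I do not expect any serious obstacle here: the heavy lifting has already been done in Lemma~\ref{keylemma}, which promoted the pointwise curvature lower bound to an averaged lower bound along minimizing paths with an error controlled by $I^\alpha_{\kappa_0}$ or $I_{\kappa_0}$. The only mild care is needed to ensure the denominators are nonzero (which follows since $\kappa_0>0$ and, for the limit argument, the diameter is a fixed integer so the inequality $d(x^*,y^*)\leq (2+I_{\kappa_0})/\kappa_0$ is unaffected by passing to the limit). The floor function appears simply because the left-hand side is a positive integer.
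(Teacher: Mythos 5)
Your proposal is correct and follows essentially the same route as the paper: combine the upper bound from Lemma~\ref{LLY_lemma2.2} with the lower bound from Lemma~\ref{keylemma}, isolate $d(x,y)$, take the floor since the distance is an integer, and pass to the limit $\alpha\to 1^-$ for the Lin--Lu--Yau version. The only cosmetic difference is that the paper applies the squeeze to an arbitrary pair of vertices rather than a diameter-realizing pair, which is equivalent.
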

\begin{proof}[Proof of Theorem~\ref{BonnetMyers_intcurv}]
    Let $x,y\in V$ be any two vertices. Combining \eqref{lemma_alpha} with \eqref{LLY_lemma2.2}, we get 
    \[(1-\alpha)\frac{2}{d(x,y)}\geq \kappa_\alpha(x,y) \geq (1-\alpha)\kappa_0-\frac{I^\alpha_{\kappa_0}}{d(x,y)}.\]
    Therefore, 
    \[d(x,y) \leq \frac{2+ \displaystyle \frac{I^\alpha_{\kappa_0}}{(1-\alpha)}}{\kappa_0}.\]
    Since $d(x,y)$ is an integer, we have
        \[d(x,y) \leq \left\lfloor\frac{2+ \displaystyle \frac{I^\alpha_{\kappa_0}}{(1-\alpha)}}{\kappa_0}\right\rfloor.\]
    Finally, taking the limit as $\alpha\rightarrow 1^-$, we get 
\[d(x,y) \leq \left\lfloor\frac{2+ I_{\kappa_0}}{\kappa_0}\right\rfloor.\]
Since $x$ and $y$ were arbitrary, the estimate on the diameter of $G$ follows. 
\end{proof}

\subsubsection{Examples}
The diameter estimate \eqref{LLY_diam_bound} is sharp, as the following example shows. This is also an example where the diameter estimate from \cite{LinLuYau} does not apply, since there are edges with curvature $\kappa_{LLY}=0$. 
\begin{example}\label{example_sharp_diam}
    The Path graph $G=P_n$ with $n\geq 3$ vertices has Lin-Lu-Yau Ricci curvature $1$ on the edges adjacent to leaves, and $0$ on all other edges. Therefore, choosing $\kappa_0 =1$, we have $I_{\kappa_0} = |E|-2 = (n-3)$, hence \eqref{LLY_diam_bound} gives
    \[{\rm Diam}(P_n) \leq \left\lfloor \frac{2+I_{\kappa_0}}{\kappa_0} \right\rfloor = 2+(n-3) = n-1,\]
    which is sharp.
    \begin{figure}[h]
        \centering
        \includegraphics[width=0.5\linewidth]{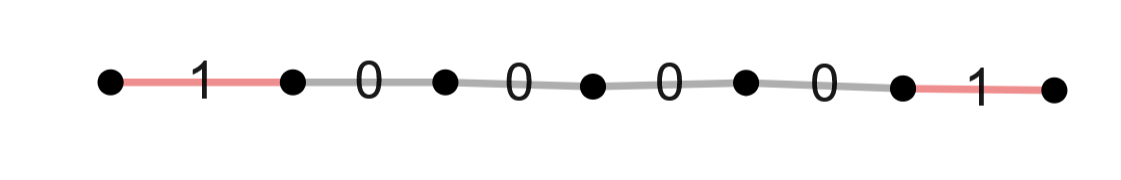}
        \caption{Representation of $P_7$ with Lin-Lu-Yau curvature labeled on each edge, generated using the Graph Curvature Calculator from \cite{CKLLS}.}
        \label{fig:enter-label}
    \end{figure}
\end{example}
Estimate \eqref{LLY_diam_bound} also holds when there is strictly negative curvature, like in the following example, contrasting with what happens with the diameter estimate from \cite{LinLuYau}.
\begin{example}
    For any $m\geq 3$, consider the Dumbbell Graph $G=K_m-K_m$ consisting of two copies of complete graphs $K_m$ with $m$ vertices, joined by one edge. Let $\{x_0,x_1,\ldots, x_{m-1}\}$ and $\{y_0,y_1,\ldots, y_{m-1}\}$ denote the vertices of each copy of $K_m$, respectively, and suppose w.l.o.g. that $x_0y_0\in E$. Let $i,j\not =0$ be distinct indices. We can distinguish between the following three kinds of edge curvatures
    \begin{align*}
        \kappa_{LLY}(x_i,x_j) = \kappa_{LLY}(y_i,y_j) &= \frac{m}{m-1},\\
        \kappa_{LLY}(x_0,x_j) = \kappa_{LLY}(y_0,y_j) &= \frac{(m-1)^2+1}{m(m-1)},\\
        \kappa_{LLY}(x_0,y_0) &= -\frac{2(m-2)}{m}.
    \end{align*}
Choosing $\kappa_0 = \displaystyle \frac{(m-1)^2+1}{m(m-1)}$, we have
\[I_{\kappa_0} = \frac{(m-1)^2+1}{m(m-1)} + \frac{2(m-2)}{m} = \frac{3m^2-8m+6}{m(m-1)}.\]
Hence, \eqref{LLY_diam_bound} gives the estimate
\[{\rm Diam}(K_m-K_m) \leq \left\lfloor \frac{2+\frac{3m^2-8m+6}{m(m-1)}}{\frac{(m-1)^2+1}{m(m-1)}}\right\rfloor = \left\lfloor \frac{5(m-1)^2+1}{(m-1)^2+1} \right\rfloor =\left\lfloor 5 -\frac{4}{(m-1)^2+1} \right\rfloor=4.\]
Note that, although the estimate is not sharp in this case, since ${\rm Diam}(K_m-K_m)=3$, it is a very close bound and, most notably, it is $O(1)$, i.e. independent of $m$. This is remarkable, as the diameter of a connected graph with $n=2m$ vertices could be as large as $2m-1$ in the case of $P_{2m}$. 
    \begin{figure}[h]
        \centering
        \includegraphics[width=0.5\linewidth]{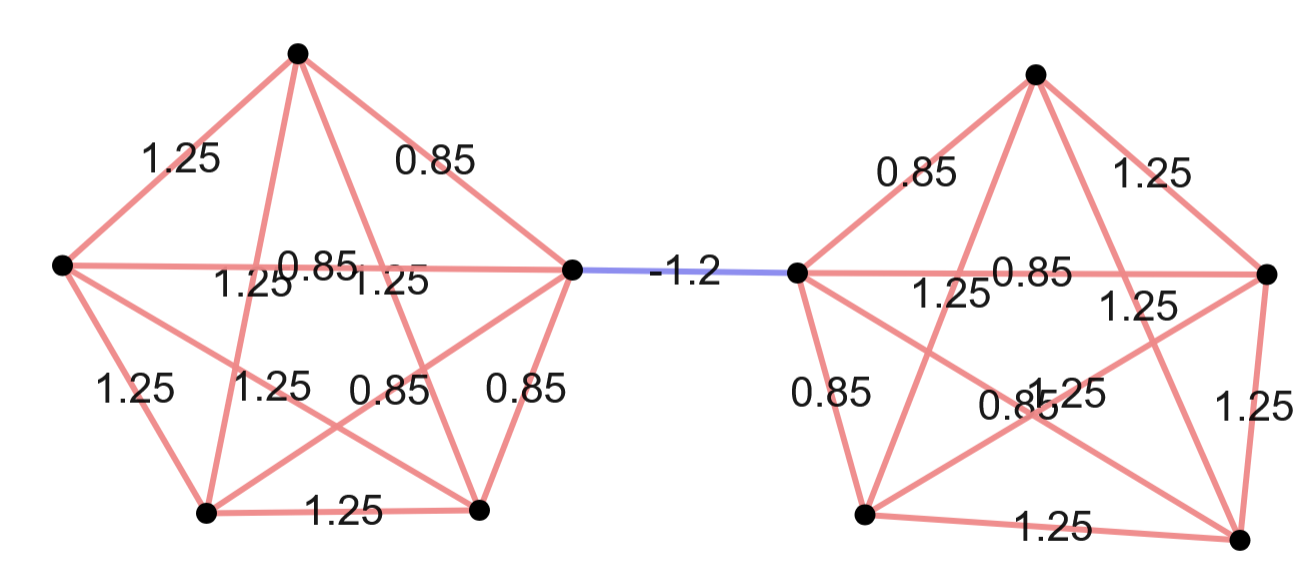}
        \caption{Representation of $K_5-K_5$ with Lin-Lu-Yau curvature labeled on each edge, generated using the Graph Curvature Calculator from \cite{CKLLS}.}
        \label{fig:enter-label}
    \end{figure}
\end{example}

Similarly to what happens in the manifold case, estimates with integral curvature conditions are only meaningful when there is a small amount of curvature below the threshold $\kappa_0$, i.e. when the graph is a small perturbation of a graph with Ricci curvature bounded below by $\kappa_0$. One can see this in the following example, where the diameter bound \eqref{LLY_diam_bound} becomes meaningless, since it reaches the maximum diameter of any connected graph with $n$ vertices, ${\rm Diam}(G)\leq n-1$, which is only achieved by the path graph $P_n$. 

\begin{example}
    For any $m\geq 0$, consider a symmetric binary tree with $2^{m+1}-1$  nodes constructed in the following way. For $0\leq i\leq m$, arrange the nodes in $m+1$ layers, with $2^i$ nodes in the $i$-th layer, and connect each of them to one of the nodes in the $(i-1)$-th layer, except for the node in layer $0$; each node in the $i$-th layer should be connected to exactly two nodes in the $i+1$-th layer, except for the nodes in layer $m$. Attach a leaf at the only node on layer $0$, and denote the resulting binary tree $T_m$ (see Figure~\ref{tree_fig} below), which has a total of $n=|V|=2^{m+1}$ vertices, $|E| = 2^{m+1}-1$ edges, and a diameter of ${\rm Diam}(T_m) = \max\{2m,1\}$.
    \vspace{1em}

    $T_m$ has two kinds of edges: the ones adjacent to a leaf have curvature $\kappa_{LLY}=2/3$, and the ones not adjacents to a leaf have curvature $\kappa_{LLY}=-2/3$. Using $\kappa_0 = 2/3$ and the fact that the number of edges adjacent to a leaf is $2^m+1$, we have 
    \[I_{\kappa_0} = \frac{4}{3}\left( |E| - (2^m+1)\right) = \frac{4}{3}\left( 2^{m+1}-1-(2^m+1)\right)=\frac{8}{3}(2^{m-1}-1).\]
    Therefore, 
    \[{\rm Diam}(T_m) \leq \left\lfloor \frac{2+I_{\kappa_0}}{\kappa_0} \right\rfloor\leq \left\lfloor \frac{2+\frac{8}{3}(2^{m-1}-1)}{2/3} \right\rfloor=2^{m+1}-1.\]
    As stated above, this estimate is meaningless in the sense that, despite knowing the curvatures of all the edges in the graph, the estimate obtained from \eqref{LLY_diam_bound} is the same as the largest diameter that a connected graph with $2^{m+1}$ vertices could have, which we could have known without studying its Lin-Lu-Yau Ricci curvature. Moreover, notice that the estimate obtained above is much larger than the actual diameter of $T_m$, which is $2m$ when $m\geq 1$. Intuitively, this is due to the fact that $I_{\kappa_0}$ contains information about all edges in $T_m$ with curvature below $\kappa_0$, as opposed to looking only at the edges of the path realizing the diameter of the graph. In short, there is too much curvature below $\kappa_0$ to obtain a meaningful estimate.

 \begin{figure}[h]
        \centering
        \includegraphics[width=0.5\linewidth]{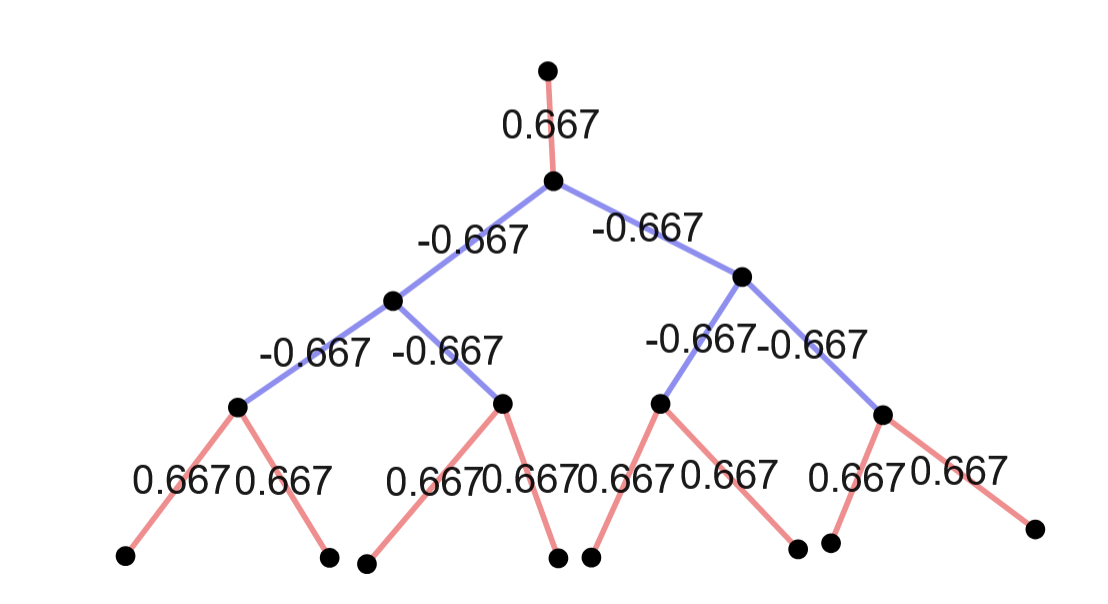}
        \caption{Representation of $T_3$ with Lin-Lu-Yau curvature labeled on each edge, generated using the Graph Curvature Calculator from \cite{CKLLS}.}
        \label{tree_fig}
    \end{figure}

    In some cases, \eqref{LLY_diam_bound} may even exceed the trivial bound on the diameter ${\rm Diam}(G)\leq n-1$ of a connected graph with $|V|=n$. This is the case of the graph $G$ in Figure~\ref{benzene_fig} below, with $n=12$ vertices, when one chooses $\kappa_0 = 2/3$, which leads to $I_{\kappa_0}=8$ and the bound 
    \[{\rm Diam}(G) \leq \left\lfloor \frac{2+8}{2/3}\right\rfloor =15,\]
    which exceeds $n-1=11$.
     \begin{figure}[h]
        \centering
        \includegraphics[width=0.5\linewidth]{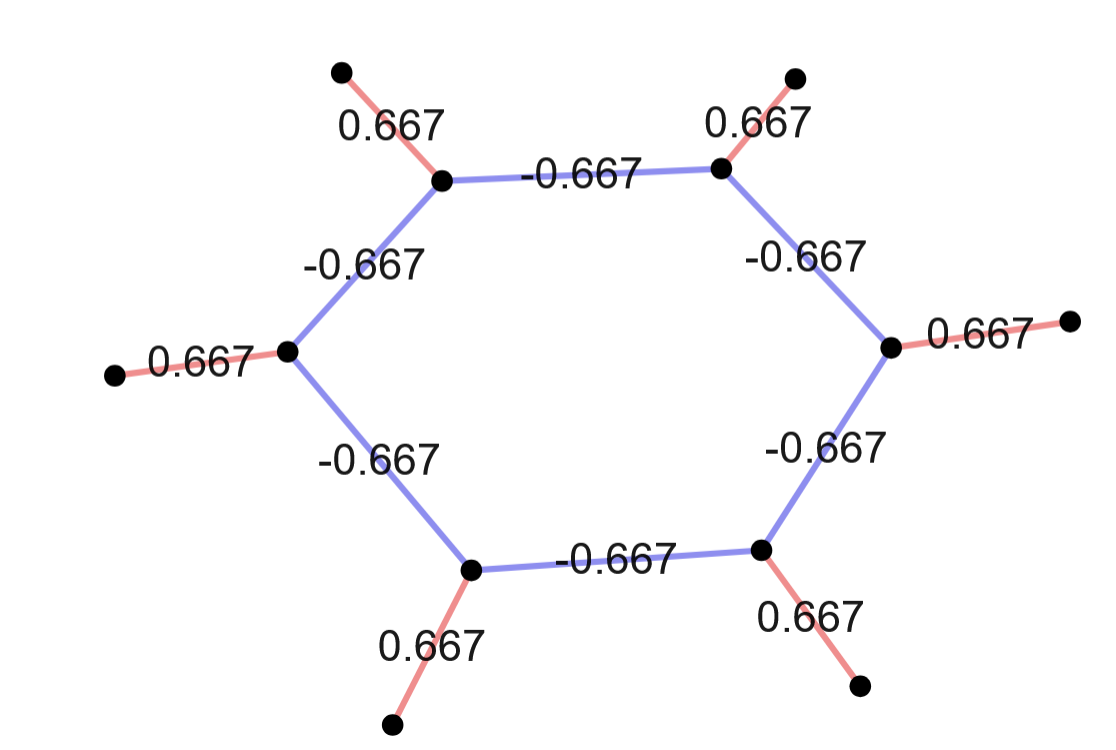}
        \caption{Representation of $G$ with Lin-Lu-Yau curvature labeled on each edge, generated using the Graph Curvature Calculator from \cite{CKLLS}.}
        \label{benzene_fig}
    \end{figure}
\end{example}

\subsection{Moore-type bound}

 Recall that Moore's bound establishes that a graph with maximum degree $d_M$ and diameter bounded by $D$ will have a number of vertices $n=|V|$ of at most 
\begin{equation}\label{Moore}
    n\leq 1+\sum_{k=1}^D d_M(d_M-1)^{k-1}.
\end{equation}
Theorem~\ref{LLY_Moore} is a refined version of Moore's estimate for positively curved graphs. In this subsection we will generalize Theorem \ref{LLY_Moore} to the integral curvature setting, proving Theorem~\ref{Moore_intcurv}, which we restate below for convenience.

\begin{theorem*}[\bf\ref{Moore_intcurv}]
Let $d_M$ be the maximum degree of $G$, and suppose that ${\rm Diam}(G)=D$. For any $\kappa_0\in \mathbb{R}$, the number of vertices $n=|V|$ of $G$ is at most

\begin{equation}\label{Moore_intcurv_bound_diam}
    n\leq 1+ \sum_{k=1}^{D} (d_M)^k \prod_{i=1}^{k-1}\left[1+ \frac{I_{\kappa_0}-i\kappa_0}{2}\right].
\end{equation}
Moreover, if $\kappa_0>0$,  we have
\begin{equation}\label{Moore_intcurv_bound}
    n\leq 1+ \sum_{k=1}^{\left\lfloor \frac{2+I_{\kappa_0}}{\kappa_0}\right\rfloor} (d_M)^k \prod_{i=1}^{k-1}\left[1+ \frac{I_{\kappa_0}-i\kappa_0}{2}\right].
\end{equation}
 
\end{theorem*}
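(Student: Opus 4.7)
The plan is to fix a root vertex $v_0 \in V$, partition $V$ into distance spheres $S_k(v_0) = \{v \in V : d(v_0, v) = k\}$, and bound each $|S_k(v_0)|$ by induction on $k$. The target estimate is
\[
|S_k(v_0)| \leq (d_M)^k \prod_{i=1}^{k-1}\left[1 + \frac{I_{\kappa_0} - i \kappa_0}{2}\right];
\]
summing over $k \in \{0,1,\ldots,D\}$ and adding the contribution of $v_0$ itself yields \eqref{Moore_intcurv_bound_diam}, while invoking \eqref{LLY_diam_bound} from Theorem~\ref{BonnetMyers_intcurv} to truncate the sum produces \eqref{Moore_intcurv_bound}.

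First, I would use the elementary observation that every $z \in S_k(v_0)$ has at least one neighbor in $S_{k-1}(v_0)$, so $|S_k(v_0)| \leq \sum_{y \in S_{k-1}(v_0)} c(y)$, where $c(y)$ is the number of neighbors of $y$ lying in $S_k(v_0)$. Thus the inductive step reduces to the pointwise estimate
\[
c(y) \leq d_M\left[1 + \frac{I_{\kappa_0} - (k-1)\kappa_0}{2}\right] \qquad \text{for every } y \in S_{k-1}(v_0).
\]

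To obtain this pointwise estimate, I would combine two bounds on $W(m_{v_0}^\alpha, m_y^\alpha)$. The upper bound comes from Lemma~\ref{keylemma} applied to the pair $(v_0, y)$, which lies at distance $k-1$: it gives $\kappa_\alpha(v_0, y) \geq (1-\alpha)\kappa_0 - I_{\kappa_0}^\alpha/(k-1)$, equivalently $W(m_{v_0}^\alpha, m_y^\alpha) \leq (k-1) - (k-1)(1-\alpha)\kappa_0 + I_{\kappa_0}^\alpha$. The lower bound comes from the dual formulation \eqref{transp_distance_dual} evaluated at the $1$-Lipschitz function $\phi(v) = d(v_0, v)$. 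Writing $a, b, c$ for the numbers of neighbors of $y$ in $S_{k-2}(v_0)$, $S_{k-1}(v_0)$, $S_k(v_0)$ respectively (so $a+b+c = d_y \leq d_M$), the lower bound is an explicit linear expression in $a, b, c$. After equating the two bounds, dividing by $(1-\alpha)$, and letting $\alpha \to 1^-$ (which sends $I_{\kappa_0}^\alpha/(1-\alpha) \to I_{\kappa_0}$), I expect the algebra to collapse to the single combinatorial constraint
\[
2a + b \geq d_y\bigl[(k-1)\kappa_0 - I_{\kappa_0}\bigr].
\]
Maximizing $c = d_y - a - b$ subject to this constraint and $a, b \geq 0$ is a trivial LP whose optimum at $b = 0$, $a = d_y\bigl[(k-1)\kappa_0 - I_{\kappa_0}\bigr]/2$ yields $c \leq d_y\bigl[1 + (I_{\kappa_0} - (k-1)\kappa_0)/2\bigr] \leq d_M\bigl[1 + (I_{\kappa_0} - (k-1)\kappa_0)/2\bigr]$, completing the inductive step.

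The hard part will be the optimal-transport duality computation: carefully tracking how the $\alpha$-dependent factors arising from $\int \phi\, dm_{v_0}^\alpha$ and $\int \phi\, dm_y^\alpha$ interact with the curvature upper bound, and verifying that the $(1-\alpha)$ factors cancel cleanly so that the $\alpha \to 1^-$ limit produces precisely the coefficient $I_{\kappa_0}$ (rather than $I_{\kappa_0}^\alpha/(1-\alpha)$ at some nontrivial $\alpha$). The remark following Theorem~\ref{Moore_intcurv} guarantees the factor $1 + (I_{\kappa_0} - i\kappa_0)/2$ is strictly positive for $i < D$, so the induction is never vacuous; in the borderline case where the LP would be infeasible at some level, one concludes instead that $S_{k-1}(v_0) = \emptyset$ and the bound holds trivially. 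The strengthened estimate \eqref{Moore_intcurv_bound} under $\kappa_0 > 0$ then follows by substituting the diameter bound $D \leq \lfloor (2+I_{\kappa_0})/\kappa_0 \rfloor$ from Theorem~\ref{BonnetMyers_intcurv}.
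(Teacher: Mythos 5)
Your proposal is correct and follows essentially the same route as the paper: decompose $V$ into distance spheres, bound $|\Gamma_{v_0}^+(y)|$ by combining the lower curvature bound of Lemma~\ref{keylemma} with an upper bound in terms of the neighbor counts $a,b,c$, and induct; your constraint $2a+b\geq d_y[(k-1)\kappa_0-I_{\kappa_0}]$ and the resulting factor $d_M[1+(I_{\kappa_0}-(k-1)\kappa_0)/2]$ match the paper exactly. The only difference is that where you re-derive the needed curvature upper bound via the dual formulation \eqref{transp_distance_dual} with the test function $\phi(v)=d(v_0,v)$ (a computation that does go through, with the $(1-\alpha)$ factors cancelling as you predict), the paper simply invokes Lemma~4.4 of \cite{LinLuYau}, which encapsulates that same calculation.
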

\begin{remark} Note that, by contrast to Moore's result \eqref{Moore},  Theorem \ref{LLY_Moore} and the estimate \eqref{Moore_intcurv_bound} in Theorem \ref{Moore_intcurv} do not require a priori bounds on the diameter of the graph, since those follow from Theorems \ref{LLY_BonnetMyers} and \ref{BonnetMyers_intcurv}, respectively. Moreover, Theorem \ref{Moore_intcurv} does not require a positive lower bound on the curvature. 
\end{remark}
\begin{remark} For large $d_M$, estimate \eqref{Moore_intcurv_bound_diam} is a significant improvement over Moore's estimate \eqref{Moore} when $I_{\kappa_0}-\kappa_0<0$. Estimate \eqref{Moore_intcurv_bound} recovers Theorem~\ref{LLY_Moore} in the limit where $\kappa_{LLY}(x,y)\geq \kappa_0>0$ for all edges $xy\in E$.
\end{remark}
To establish this result, the following notation will be useful. Let $x,y\in V$ be two distinct vertices. The neighborhood of $y$, $\Gamma(y)$, can be partitioned into the following sets
\begin{align*}
    \Gamma_x^+(y) &:= \left\{ v\in \Gamma(y) \colon \ d(x,v) = d(x,y)+1\right\},\\
    \Gamma_x^0(y) &:= \left\{ v\in \Gamma(y) \colon \ d(x,v) = d(x,y)\right\},\ \text{and}\\
    \Gamma_x^-(y) &:= \left\{ v\in \Gamma(y) \colon \ d(x,v) = d(x,y)-1\right\}.
\end{align*}
We will also need the following Lemma from \cite{LinLuYau}.
\begin{lemma}[\cite{LinLuYau} Lemma~4.4]\label{LLY_lemma4.4} For any two distinct vertices $x$ and $y$, we have
\begin{equation}
    \kappa_{LLY}(x,y)\leq \displaystyle\frac{\displaystyle 1+\frac{|\Gamma_x^-(y)|-|\Gamma_x^+(y)|}{d_y}}{d(x,y)}.
\end{equation}
\end{lemma}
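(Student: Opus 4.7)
The plan is to bound $W(m_x^\alpha, m_y^\alpha)$ from below by exhibiting an explicit $1$-Lipschitz test function in the dual representation \eqref{transp_distance_dual}, compute everything in closed form in terms of the partition $\Gamma(y) = \Gamma_x^+(y) \sqcup \Gamma_x^0(y) \sqcup \Gamma_x^-(y)$, and then pass to the limit $\alpha \to 1^-$ to convert the resulting upper bound on $\kappa_\alpha(x,y)$ into the desired upper bound on $\kappa_{LLY}(x,y)$.

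The natural choice of test function is $f(v) := -d(x,v)$, which is $1$-Lipschitz on $(V,d)$ and places its largest values on vertices close to $y$ (so that $m_y^\alpha$ sees larger values of $f$ than $m_x^\alpha$ does, giving a positive pairing). With this $f$, I would evaluate the two marginal sums directly: since $d(x,v) = 1$ for every $v \in \Gamma(x)$, one immediately obtains
\begin{equation*}
\sum_{v\in V} f(v)\, m_x^\alpha(v) = -\alpha \cdot 0 - \frac{1-\alpha}{d_x}\sum_{v\in \Gamma(x)} 1 = -(1-\alpha).
\end{equation*}
For the $y$-sum, I would split $\Gamma(y)$ using the three sets in the definition, using that $d(x,v)$ takes the values $d(x,y)+1$, $d(x,y)$, $d(x,y)-1$ on the three pieces. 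After collecting terms, $\sum_v f(v)\, m_y^\alpha(v)$ reduces to $-d(x,y) - \frac{1-\alpha}{d_y}\bigl[|\Gamma_x^+(y)| - |\Gamma_x^-(y)|\bigr]$.

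Subtracting these two expressions and applying \eqref{transp_distance_dual}, I would obtain
\begin{equation*}
W(m_x^\alpha, m_y^\alpha) \ \geq\ d(x,y) - (1-\alpha)\left[ 1 + \frac{|\Gamma_x^-(y)| - |\Gamma_x^+(y)|}{d_y}\right],
\end{equation*}
which, substituted into the definition \eqref{alpha_Ricci_curvature}, yields
\begin{equation*}
\kappa_\alpha(x,y) \ \leq\ \frac{(1-\alpha)\left[1 + \dfrac{|\Gamma_x^-(y)| - |\Gamma_x^+(y)|}{d_y}\right]}{d(x,y)}.
\end{equation*}
Dividing both sides by $(1-\alpha)$ and taking $\alpha \to 1^-$ using the definition \eqref{LinLuYau_curvature} of $\kappa_{LLY}$ gives the claimed inequality.

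The only subtle step is checking that $f(v) = -d(x,v)$ really is $1$-Lipschitz, which follows immediately from the reverse triangle inequality for the graph metric. After that, the proof is a bookkeeping calculation: the main conceptual point is the choice of test function together with the neighborhood partition, both of which are dictated by the form of the inequality we are trying to prove. No further analytic input is required, so I do not anticipate any real obstacle beyond keeping the signs and the $(1-\alpha)$ factors straight when passing to the $\kappa_{LLY}$ limit.
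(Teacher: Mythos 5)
Your proof is correct. The paper itself does not prove this lemma --- it is quoted directly from Lin--Lu--Yau (their Lemma 4.4) --- but your argument, plugging the $1$-Lipschitz test function $f(v)=-d(x,v)$ into the Kantorovich dual \eqref{transp_distance_dual} to get the lower bound $W(m_x^\alpha,m_y^\alpha)\geq d(x,y)-(1-\alpha)\bigl[1+\tfrac{|\Gamma_x^-(y)|-|\Gamma_x^+(y)|}{d_y}\bigr]$ and then passing to the limit in $\alpha$, is precisely the standard proof given in the original source, and all the sign and $(1-\alpha)$ bookkeeping checks out.
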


\begin{proof}[Proof of Theorem~\ref{Moore_intcurv}]
This proof will follow a similar argument to the proof of Theorem 4.3 in \cite{LinLuYau}. For any distance $i$, $1\leq i\leq D$, define
\[\Gamma_i(x):=\left\{v\in G\colon d(x,v)=i\right\}.\]
For any $y\in \Gamma_i(x)$, using Lemma~\ref{keylemma} and Lemma~\ref{LLY_lemma4.4}, we have
\begin{align*}
2+\left[I_{\kappa_0}-\kappa_0 i\right]&\geq 2-d(x,y)\kappa_{LLY}(x,y)\\
&\geq 2-\left[1+\frac{|\Gamma_x^-(y)|-|\Gamma_x^+(y)|}{d_y}\right]\\
&= \frac{d_y-|\Gamma_x^-(y)|+|\Gamma_x^+(y)|}{d_y}.    
\end{align*}
Since $d_y=|\Gamma(y)|=|\Gamma_x^+(y)|+|\Gamma_x^0(y)|+|\Gamma_x^-(y)|$, using that $|\Gamma_x^0(y)|\geq 0$, we obtain
\[2+I_{\kappa_0}-\kappa_0 i \geq 2\frac{|\Gamma_x^+(y)|}{d_y}.\]
Therefore, 
\[|\Gamma_x^+(y)|\leq \left[1+ \frac{I_{\kappa_0}-i\kappa_0}{2} \right]d_y\leq\left[1+ \frac{I_{\kappa_0}-i\kappa_0}{2} \right]d_M.\]
Notice that if $v\in \Gamma_{i+1}(x)$, then $v\in \Gamma_x^+(y)$ for some $y\in \Gamma_i(x)$. Thus, we have
\[|\Gamma_{i+1}(x)|\leq \sum_{y\in \Gamma_i(x)}|\Gamma_x^+(y)| \leq \sum_{y\in \Gamma_i(x)}\left[ 1+\frac{I_{\kappa_0}-i\kappa_0}{2}\right]d_M \leq |\Gamma_i(x)|\left[ 1+\frac{I_{\kappa_0}-i\kappa_0}{2}\right]d_M .\]
Since $|\Gamma_1(x)| = d_x\leq d_M$, by induction over $k$, we have 
\[|\Gamma_k(x)|\leq (d_M)^k \prod_{i=1}^{k-1}\left[1+\frac{I_{\kappa_0}-i\kappa_0}{2}\right].\]
Finally, by exhausting all the possible distances, we have 
\begin{align*}
    n&=1+\sum_{k=1}^{D} |\Gamma_k(x)|\\
    & \leq 1+\sum_{k=1}^{D} (d_M)^k\prod_{i=1}^{k-1}\left[1+ \frac{I_{\kappa_0}-i\kappa_0}{2}\right].
\end{align*}
Moreover, if $\kappa_0>0$, then by Theorem~\ref{BonnetMyers_intcurv} we know that  $D={\rm Diam}(G)\leq \left\lfloor \frac{2+I_{\kappa_0}}{\kappa_0} \right\rfloor$. Using $D=\left\lfloor \frac{2+I_{\kappa_0}}{\kappa_0} \right\rfloor$ we obtain \eqref{Moore_intcurv_bound}.
\end{proof}

\begin{remark}\label{remark_Moore_graphs}
    One may hope that a result like Theorem~\ref{Moore_intcurv} would give some curvature obstruction for the existence of a Moore graph $G$ with $d_M=57$ and ${\rm Diam}(G)=2$, whose existence, to the best of our knowledge, is still unknown  (see the survey \cite{Dalfo}, as well as \cite{BannaiIto}, \cite{Damerell}, \cite{FaberKeegan}). A Moore graph is a graph satisfying equality in estimate \eqref{Moore}. Therefore, a Moore graph with maximum degree $d_M$ and diameter $D=2$ will have  $n=1+(d_M)^2$. Using \eqref{Moore_intcurv_bound_diam} one can get
    \[\kappa_0-I_{\kappa_0}\leq \frac{2}{d_M}.\]
    In particular, it will have at least one edge satisfying
    \[\kappa_{LLY}(x,y)\leq \frac{2}{d_M}.\]
    Unfortunately, these are not new obstructions when $d_M=57$, since it follows from Theorem~1.4 in \cite{LinYou} that for $d_M>6$ such graph would not be positively curved, which implies $\kappa_0-I_{\kappa_0}<0$.
\end{remark}



    

\subsection{Lichnerowicz-type estimate} 

Denote $\lambda_1$ the first non-zero eigenvalue of the graph Laplacian \[\Delta = I - D^{-1}A,\]
where $A$ denotes the adjacency matrix of $G$ and $D$ is the diagonal matrix of degrees.

\begin{theorem*}[\bf \ref{Lichnerowicz_intcurv}]
For any $\kappa_0>0$ and any $\alpha \in [0,1)$, we have
\begin{equation}\label{Lic_int_alpharicci}
    \lambda_1 \geq \kappa_0 -\frac{I_{\kappa_0}^\alpha}{1-\alpha},
\end{equation}
and
\begin{equation}\label{Lic_int_LLYricci}
    \lambda_1 \geq \kappa_0 - I_{\kappa_0}.
\end{equation}
\end{theorem*}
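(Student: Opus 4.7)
The plan is to follow the proof of Theorem~\ref{LLY_Lichnerowicz} in \cite{LinLuYau} very closely, substituting the integral curvature quantity for the pointwise lower bound only at the final step. I begin with an eigenfunction $f:V\to\mathbb{R}$ of the graph Laplacian associated with $\lambda_1$, so that the eigenvalue equation reads $\bar f(x) = (1-\lambda_1)f(x)$, where $\bar f(x) := d_x^{-1}\sum_{v\in\Gamma(x)} f(v)$. Set $L := \max_{xy\in E}|f(x)-f(y)|$; a telescoping argument along shortest paths shows that $f/L$ is $1$-Lipschitz with respect to the graph distance $d$, and I fix an edge $x_0y_0\in E$ realizing the maximum, so that, without loss of generality, $f(x_0)-f(y_0)=L$.

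The next step is a standard direct computation using definition \eqref{alpha_lazy} together with the eigenvalue equation, yielding
\[
    \sum_{v\in V} f(v)\,m_x^\alpha(v) \;=\; \alpha f(x)+(1-\alpha)\bar f(x) \;=\; [1-(1-\alpha)\lambda_1]\,f(x).
\]
Plugging the $1$-Lipschitz test function $f/L$ into the dual Kantorovich representation \eqref{transp_distance_dual} at the pair $(x_0,y_0)$ produces
\[
    W(m^\alpha_{x_0},m^\alpha_{y_0}) \;\geq\; \frac{[1-(1-\alpha)\lambda_1]\bigl(f(x_0)-f(y_0)\bigr)}{L} \;=\; 1-(1-\alpha)\lambda_1,
\]
and since $d(x_0,y_0)=1$ this rearranges to the upper bound $\kappa_\alpha(x_0,y_0) \leq (1-\alpha)\lambda_1$. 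On the other hand, Definition~\ref{integralcurv_def} gives the matching lower bound $\kappa_\alpha(x_0,y_0) \geq (1-\alpha)\kappa_0 - \rho^\alpha_{\kappa_0}(x_0,y_0)$, and the non-negativity of all summands in \eqref{alphaintegralcurv_def} yields $\rho^\alpha_{\kappa_0}(x_0,y_0) \leq I^\alpha_{\kappa_0}$. Chaining these three inequalities gives \eqref{Lic_int_alpharicci}, and taking $\alpha\to 1^-$ together with the limit identity $\lim_{\alpha\to 1^-} I^\alpha_{\kappa_0}/(1-\alpha) = I_{\kappa_0}$ gives \eqref{Lic_int_LLYricci}.

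The main obstacle is conceptual rather than technical. The final step---bounding the edge-local quantity $\rho^\alpha_{\kappa_0}(x_0,y_0)$ by the global sum $I^\alpha_{\kappa_0}$---is very crude, since it discards all information about edges other than $x_0y_0$. This is exactly why the inequality cannot improve upon the pointwise Lin-Lu-Yau estimate in a substantive way: the Kantorovich duality argument only engages curvature at the single edge where the eigenfunction's gradient is largest. Any genuine sharpening would need to couple $\lambda_1$ and $f$ to a weighted average of $\rho^\alpha_{\kappa_0}$ across many edges, which appears to require a genuinely new averaging mechanism rather than the edge-by-edge duality employed here.
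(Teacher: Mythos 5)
Your argument is correct, and it reaches the conclusion by a genuinely different route than the paper. The paper's proof runs in the ``contraction'' direction: it uses Lemma~\ref{keylemma} to bound $W(m_x^\alpha,m_y^\alpha)$ from above for \emph{every} pair $x\neq y$, deduces that the averaging operator $M_\alpha=I-(1-\alpha)\Delta$ shrinks Lipschitz constants by the factor $1+(1-\alpha)\bigl(\tfrac{I^\alpha_{\kappa_0}}{1-\alpha}-\kappa_0\bigr)$, and then compares this mixing rate with the spectral value $1-(1-\alpha)\lambda_1$. You instead run the duality in the opposite direction: you feed the (normalized) eigenfunction itself into \eqref{transp_distance_dual} at the single edge $x_0y_0$ where its increment is largest, obtaining the \emph{upper} bound $\kappa_\alpha(x_0,y_0)\leq(1-\alpha)\lambda_1$, and then close the loop with the pointwise inequality $\kappa_\alpha(x_0,y_0)\geq(1-\alpha)\kappa_0-\rho^\alpha_{\kappa_0}(x_0,y_0)\geq(1-\alpha)\kappa_0-I^\alpha_{\kappa_0}$, which follows directly from Definition~\ref{integralcurv_def} and needs no path decomposition. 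Your version is more elementary in that it bypasses both Lemma~\ref{keylemma} and the mixing-rate discussion, at the cost of being entirely local to one edge; the paper's version yields, as a by-product, a Lipschitz-contraction statement for $M_\alpha$ that controls all nonzero eigenvalues, not just $\lambda_1$. Two small points you should make explicit: the eigenfunction for $\lambda_1>0$ is necessarily non-constant, so $L>0$ and the normalization $f/L$ is legitimate; and the telescoping bound $|f(x)-f(y)|\leq L\,d(x,y)$ along a shortest path is what justifies $1$-Lipschitzness, as you indicate. Your closing observation --- that bounding $\rho^\alpha_{\kappa_0}(x_0,y_0)$ by the full sum $I^\alpha_{\kappa_0}$ is the step that prevents any genuine improvement over Theorem~\ref{LLY_Lichnerowicz} --- matches the discussion in Remark~\ref{remark_Lichnerowicz} and correctly identifies where a new averaging mechanism would be needed.
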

\begin{proof}
    As in the proof of Theorem~4.2 in \cite{LinLuYau}, let $M_\alpha$ be the average operator associated to the $\alpha$-lazy random walk, i.e. for any function $f:V\rightarrow \mathbb{R}$, $M_\alpha[f]$ is the function defined as 
    \begin{equation}
        M_\alpha[f](x) = \sum_{z\in V} f(z) m_x^\alpha(z),
    \end{equation}
    for any $x\in V$.
    \vspace{1em}
    
    It follows from Lemma~\ref{keylemma} that 
    \begin{equation}
    W(m_x^\alpha, m_y^\alpha) = \left[ 1-\kappa_\alpha(x,y)\right] d(x,y)\leq \left[ 1+(1-\alpha)\left( \frac{I_{\kappa_0}^\alpha}{(1-\alpha)d(x,y)}-\kappa_0\right)\right] d(x,y),  
    \end{equation}
    therefore if $x\not = y$, $d(x,y)\geq 1$, and we have 
    \begin{equation}
        W(m_x^\alpha, m_y^\alpha) \leq \left[ 1+(1-\alpha)\left( \frac{I_{\kappa_0}^\alpha}{(1-\alpha)}-\kappa_0\right)\right] d(x,y).
    \end{equation}
    
    Using the dual expression \eqref{transp_distance_dual} of the transportation distance, if $f$ is $1$-Lipschitz, we have
    \begin{align*}
        \left|M_\alpha[f](x)-M_\alpha[f](y)\right| & = \left| \sum_{z\in V} f(z) (m_x^\alpha(z)-m_y^\alpha(z))\right|\\
        &\leq W(m_x^\alpha,m_y^\alpha)\\
        &\leq \left[ 1+(1-\alpha)\left( \frac{I_{\kappa_0}^\alpha}{(1-\alpha)}-\kappa_0\right)\right] d(x,y).
    \end{align*}
    Therefore, $M_\alpha[f]$ is $\left[ 1+(1-\alpha)\left( \frac{I_{\kappa_0}^\alpha}{(1-\alpha)}-\kappa_0\right)\right]$-Lipschitz, and the mixing rate of $M_\alpha$ is at most $1+(1-\alpha)\left( \frac{I_{\kappa_0}^\alpha}{(1-\alpha)}-\kappa_0\right)$. On the other hand, $M_\alpha$ can be written as 
    \[M_\alpha = \alpha I + (1-\alpha)D^{-1}A = I-(1-\alpha)\Delta,\]
    hence its eigenvalues are $1$, $1-(1-\alpha)\lambda_1$, $1-(1-\alpha)\lambda_2$, $\ldots$, $1-(1-\alpha)\lambda_{n-1}$. Therefore, the mixing rate of $M_\alpha$ is exactly $1-(1-\alpha)\lambda_1$, so we have 
    \[1-(1-\alpha)\lambda_1 \leq 1+(1-\alpha)\left( \frac{I_{\kappa_0}^\alpha}{(1-\alpha)}-\kappa_0\right),\]
    or equivalently,
    \[\lambda_1 \geq \kappa_0 - \frac{I_{\kappa_0}^\alpha}{(1-\alpha)}.\]
    Hence, taking the limit as $\alpha \rightarrow 1^{-}$, we get
    \[\lambda_1 \geq \kappa_0 - I_{\kappa_0}.\]
\end{proof}
\begin{remark}
    Note that estimate \eqref{Lic_int_LLYricci} recovers \eqref{Lichnerowicz_estimate} in the limit where $\kappa_{LLY}(x,y)\geq \kappa_0$ for all edges $xy\in E$. 
\end{remark}
\begin{remark}\label{remark_Lichnerowicz}
    In contrast to what happens in \cite{Aubry} for the smooth setting, due to the discrete nature of graphs, this estimate is not an improvement on the pointwise result of \cite{LinLuYau}. One can see that if there is a single edge with curvature $\kappa_1<\kappa_0$, then $I_{\kappa_0} = \kappa_0-\kappa_1$, and the estimate \eqref{Lic_int_LLYricci} will read 
    \[\lambda_1\geq \kappa_1.\]
    In particular, in contrast to what happens for Theorem~\ref{BonnetMyers_intcurv} and Theorem~\ref{Moore_intcurv}, this estimate does not provide any information if the graph is not positively curved. Despite this, we are including this result as a reformulation of Theorem~4.2 in \cite{LinLuYau}, which may be of interest in a context where one knows how to control $I_{\kappa_0}$ but doesn't know the exact curvatures on the graph.
\end{remark}


\begin{bibdiv}
\begin{biblist}

\bib{AndersonRamosSpinelli}{article}{
   author={Anderson, C.},
   author={Ramos Oliv\'e, X.},
   author={Spinelli, K.},
   title={Manifolds with bounded integral curvature and no positive
   eigenvalue lower bounds},
   journal={PUMP J. Undergrad. Res.},
   volume={4},
   date={2021},
   pages={222--235},
   issn={2765-8724},
   review={\MR{4338099}},
   doi={10.1080/24740500.2021.1962649},
}

\bib{Aubry}{article}{
   author={Aubry, Erwann},
   title={Finiteness of $\pi_1$ and geometric inequalities in almost
   positive Ricci curvature},
   language={English, with English and French summaries},
   journal={Ann. Sci. \'Ecole Norm. Sup. (4)},
   volume={40},
   date={2007},
   number={4},
   pages={675--695},
   issn={0012-9593},
   review={\MR{2191529}},
   doi={10.1016/j.ansens.2007.07.001},
}

\bib{Azami}{article}{
   author={Azami, Shahroud},
   title={Differential gradient estimates for nonlinear parabolic equations
   under integral Ricci curvature bounds},
   journal={Rev. R. Acad. Cienc. Exactas F\'is. Nat. Ser. A Mat. RACSAM},
   volume={118},
   date={2024},
   number={2},
   pages={Paper No. 51, 20},
   issn={1578-7303},
   review={\MR{4695867}},
   doi={10.1007/s13398-024-01552-9},
}


\bib{BannaiIto}{article}{
   author={Bannai, Eiichi},
   author={Ito, Tatsuro},
   title={On finite Moore graphs},
   journal={J. Fac. Sci. Univ. Tokyo Sect. IA Math.},
   volume={20},
   date={1973},
   pages={191--208},
   issn={0040-8980},
   review={\MR{0323615}},
}

%

\bib{Carron}{article}{
   author={Carron, Gilles},
   title={Geometric inequalities for manifolds with Ricci curvature in the
   Kato class},
   language={English, with English and French summaries},
   journal={Ann. Inst. Fourier (Grenoble)},
   volume={69},
   date={2019},
   number={7},
   pages={3095--3167},
   issn={0373-0956},
   review={\MR{4286831}},
   doi={10.5802/aif.3346},
}

\bib{CarronRose}{article}{
   author={Carron, Gilles},
   author={Rose, Christian},
   title={Geometric and spectral estimates based on spectral Ricci curvature
   assumptions},
   journal={J. Reine Angew. Math.},
   volume={772},
   date={2021},
   pages={121--145},
   issn={0075-4102},
   review={\MR{4227592}},
   doi={10.1515/crelle-2020-0026},
}

\bib{ChungYau}{article}{
   author={Chung, F. R. K.},
   author={Yau, S.-T.},
   title={Logarithmic Harnack inequalities},
   journal={Math. Res. Lett.},
   volume={3},
   date={1996},
   number={6},
   pages={793--812},
   issn={1073-2780},
   review={\MR{1426537}},
   doi={10.4310/MRL.1996.v3.n6.a8},
}




\bib{CKLLS}{article}{
   author={Cushing, David},
   author={Kangaslampi, Riikka},
   author={Lipi\"ainen, Valtteri},
   author={Liu, Shiping},
   author={Stagg, George W.},
   title={The graph curvature calculator and the curvatures of cubic graphs},
   journal={Exp. Math.},
   volume={31},
   date={2022},
   number={2},
   pages={583--595},
   issn={1058-6458},
   review={\MR{4458133}},
   doi={10.1080/10586458.2019.1660740},
}

\bib{CKKLMP}{article}{
   author={Cushing, D.},
   author={Kamtue, S.},
   author={Koolen, J.},
   author={Liu, S.},
   author={M\"unch, F.},
   author={Peyerimhoff, N.},
   title={Rigidity of the Bonnet-Myers inequality for graphs with respect to
   Ollivier Ricci curvature},
   journal={Adv. Math.},
   volume={369},
   date={2020},
   pages={107188, 53},
   issn={0001-8708},
   review={\MR{4096132}},
   doi={10.1016/j.aim.2020.107188},
}

\bib{DaiWei}{article}{
author={Dai, Xianzhe},
author={Wei, Guofang},
title={Comparison Geometry for Ricci Curvature},
eprint={https://web.math.ucsb.edu/~dai/Ricci-book.pdf},
}

\bib{DaiWeiZhang}{article}{
   author={Dai, Xianzhe},
   author={Wei, Guofang},
   author={Zhang, Zhenlei},
   title={Local Sobolev constant estimate for integral Ricci curvature
   bounds},
   journal={Adv. Math.},
   volume={325},
   date={2018},
   pages={1--33},
   issn={0001-8708},
   review={\MR{3742584}},
   doi={10.1016/j.aim.2017.11.024},
}

\bib{Damerell}{article}{
   author={Damerell, R. M.},
   title={On Moore graphs},
   journal={Proc. Cambridge Philos. Soc.},
   volume={74},
   date={1973},
   pages={227--236},
   issn={0008-1981},
   review={\MR{0318004}},
   doi={10.1017/s0305004100048015},
}

\bib{Dalfo}{article}{
   author={Dalf\'o, C.},
   title={A survey on the missing Moore graph},
   journal={Linear Algebra Appl.},
   volume={569},
   date={2019},
   pages={1--14},
   issn={0024-3795},
   review={\MR{3901732}},
   doi={10.1016/j.laa.2018.12.035},
}

\bib{FaberKeegan}{article}{
author={Faber, Vance},
author={Keegan, Jonathan},
title={Existence of a Moore graph of degree 57 is still open},
eprint={arXiv:2210.09577},
doi={10.48550/arXiv.2210.09577},
date={2022},
}

\bib{Gallot}{article}{
   author={Gallot, Sylvestre},
   title={Isoperimetric inequalities based on integral norms of Ricci
   curvature},
   note={Colloque Paul L\'{e}vy sur les Processus Stochastiques (Palaiseau,
   1987)},
   journal={Ast\'{e}risque},
   number={157-158},
   date={1988},
   pages={191--216},
   issn={0303-1179},
   review={\MR{976219}},
}


\bib{HoffmanSingleton}{article}{
   author={Hoffman, A. J.},
   author={Singleton, R. R.},
   title={On Moore graphs with diameters $2$ and $3$},
   journal={IBM J. Res. Develop.},
   volume={4},
   date={1960},
   pages={497--504},
   issn={0018-8646},
   review={\MR{0140437}},
   doi={10.1147/rd.45.0497},
}

\bib{HuaMunch}{article}{
   author={Hua, Bobo},
   author={M\"unch, Florentin},
   title={Graphs with nonnegative curvature outside a finite subset,
   harmonic functions, and number of ends},
   journal={J. Lond. Math. Soc. (2)},
   volume={110},
   date={2024},
   number={6},
   pages={Paper No. e70034, 19},
   issn={0024-6107},
   review={\MR{4829680}},
   doi={10.1112/jlms.70034},
}

\bib{li-book}{book}{
   author={Li, Peter},
   title={Geometric analysis},
   series={Cambridge Studies in Advanced Mathematics},
   volume={134},
   publisher={Cambridge University Press, Cambridge},
   date={2012},
   pages={x+406},
   isbn={978-1-107-02064-1},
   review={\MR{2962229}},
   doi={10.1017/CBO9781139105798},
}



\bib{Lichnerowicz}{book}{
   author={Lichnerowicz, Andr\'e},
   title={G\'eom\'etrie des groupes de transformations},
   language={French},
   series={Travaux et Recherches Math\'ematiques},
   volume={III},
   publisher={Dunod, Paris},
   date={1958},
   pages={ix+193},
   review={\MR{0124009}},
}

\bib{LinLuYau}{article}{
   author={Lin, Yong},
   author={Lu, Linyuan},
   author={Yau, Shing-Tung},
   title={Ricci curvature of graphs},
   journal={Tohoku Math. J. (2)},
   volume={63},
   date={2011},
   number={4},
   pages={605--627},
   issn={0040-8735},
   review={\MR{2872958}},
   doi={10.2748/tmj/1325886283},
}

\bib{LinYou}{article}{
author={Lin, Huiqiu},
author={You, Zhe},
title={Graphs with positive Lin-Lu-Yau curvature without Quadrilateral},
eprint={arXiv:2410.21887},
doi={10.48550/arXiv.2410.21887
},
date={2024},
}

\bib{LMPR}{article}{
   author={Liu, Shiping},
   author={M\"unch, Florentin},
   author={Peyerimhoff, Norbert},
   author={Rose, Christian},
   title={Distance bounds for graphs with some negative Bakry-\'Emery
   curvature},
   journal={Anal. Geom. Metr. Spaces},
   volume={7},
   date={2019},
   number={1},
   pages={1--14},
   review={\MR{3925899}},
   doi={10.1515/agms-2019-0001},
}

\bib{LottVillani}{article}{
   author={Lott, John},
   author={Villani, C\'edric},
   title={Ricci curvature for metric-measure spaces via optimal transport},
   journal={Ann. of Math. (2)},
   volume={169},
   date={2009},
   number={3},
   pages={903--991},
   issn={0003-486X},
   review={\MR{2480619}},
   doi={10.4007/annals.2009.169.903},
}
        

\bib{Mi}{article}{
   author={Mi, Rong},
   title={Gradient estimates of positive solutions for the weighted
   nonlinear parabolic equation},
   journal={Ann. Funct. Anal.},
   volume={14},
   date={2023},
   number={2},
   pages={Paper No. 32, 16},
   issn={2639-7390},
   review={\MR{4541466}},
   doi={10.1007/s43034-023-00253-5},
}

\bib{Munch}{article}{
   author={M\"unch, Florentin},
   title={Non-negative Ollivier curvature on graphs, reverse Poincar\'e{}
   inequality, Buser inequality, Liouville property, Harnack inequality and
   eigenvalue estimates},
   language={English, with English and French summaries},
   journal={J. Math. Pures Appl. (9)},
   volume={170},
   date={2023},
   pages={231--257},
   issn={0021-7824},
   review={\MR{4532964}},
   doi={10.1016/j.matpur.2022.12.007},
}

\bib{Munch2}{article}{
   author={M\"unch, Florentin},
   title={Perpetual cutoff method and discrete Ricci curvature bounds with
   exceptions},
   journal={Bull. Lond. Math. Soc.},
   volume={56},
   date={2024},
   number={5},
   pages={1613--1623},
   issn={0024-6093},
   review={\MR{4743884}},
   doi={10.1112/blms.13014},
}

\bib{Munch3}{article}{
   author={M\"unch, Florentin},
   title={Ollivier curvature, betweenness centrality and average distance},
   doi={10.48550/arXiv.2209.15564},
   year={2022},
   eprint={https://arxiv.org/abs/2209.15564},
}

\bib{MunchRose}{article}{
   author={M\"unch, Florentin},
   author={Rose, Christian},
   title={Spectrally positive Bakry-\'Emery Ricci curvature on graphs},
   language={English, with English and French summaries},
   journal={J. Math. Pures Appl. (9)},
   volume={143},
   date={2020},
   pages={334--344},
   issn={0021-7824},
   review={\MR{4163132}},
   doi={10.1016/j.matpur.2020.03.008},
}

\bib{MunchWojciechowski}{article}{
   author={M\"unch, Florentin},
   author={Wojciechowski, Rados\l aw K.},
   title={Ollivier Ricci curvature for general graph Laplacians: heat
   equation, Laplacian comparison, non-explosion and diameter bounds},
   journal={Adv. Math.},
   volume={356},
   date={2019},
   pages={106759, 45},
   issn={0001-8708},
   review={\MR{3998765}},
   doi={10.1016/j.aim.2019.106759},
}

\bib{Myers}{article}{
   author={Myers, S. B.},
   title={Riemannian manifolds with positive mean curvature},
   journal={Duke Math. J.},
   volume={8},
   date={1941},
   pages={401--404},
   issn={0012-7094},
   review={\MR{0004518}},
}


\bib{Ollivier}{article}{
   author={Ollivier, Yann},
   title={Ricci curvature of Markov chains on metric spaces},
   journal={J. Funct. Anal.},
   volume={256},
   date={2009},
   number={3},
   pages={810--864},
   issn={0022-1236},
   review={\MR{2484937}},
   doi={10.1016/j.jfa.2008.11.001},
}

\bib{PetersenSprouse}{article}{
   author={Petersen, Peter},
   author={Sprouse, Chadwick},
   title={Integral curvature bounds, distance estimates and applications},
   journal={J. Differential Geom.},
   volume={50},
   date={1998},
   number={2},
   pages={269--298},
   issn={0022-040X},
   review={\MR{1684981}},
}

\bib{PetersenWei}{article}{
   author={Petersen, P.},
   author={Wei, G.},
   title={Relative volume comparison with integral curvature bounds},
   journal={Geom. Funct. Anal.},
   volume={7},
   date={1997},
   number={6},
   pages={1031--1045},
   issn={1016-443X},
   review={\MR{1487753}},
   doi={10.1007/s000390050036},
}


\bib{PostRamosRose}{article}{
   author={Post, Olaf},
   author={Ramos Oliv\'e, Xavier},
   author={Rose, Christian},
   title={Quantitative Sobolev extensions and the Neumann heat kernel for
   integral Ricci curvature conditions},
   journal={J. Geom. Anal.},
   volume={33},
   date={2023},
   number={2},
   pages={Paper No. 70, 28},
   issn={1050-6926},
   review={\MR{4523286}},
   doi={10.1007/s12220-022-01118-4},
}

\bib{ramos}{article}{
   author={Ramos Oliv\'{e}, Xavier},
   title={Neumann Li-Yau gradient estimate under integral Ricci curvature
   bounds},
   journal={Proc. Amer. Math. Soc.},
   volume={147},
   date={2019},
   number={1},
   pages={411--426},
   issn={0002-9939},
   review={\MR{3876759}},
   doi={10.1090/proc/14213},
}

\bib{RamosRoseWangWei}{article}{
   author={Ramos Oliv\'e, Xavier},
   author={Rose, Christian},
   author={Wang, Lili},
   author={Wei, Guofang},
   title={Integral Ricci curvature and the mass gap of Dirichlet Laplacians
   on domains},
   journal={Math. Nachr.},
   volume={296},
   date={2023},
   number={8},
   pages={3559--3578},
   issn={0025-584X},
   review={\MR{4626897}},
   doi={10.1002/mana.202100523},
}

\bib{RamosSeto}{article}{
   author={Ramos Oliv\'e, Xavier},
   author={Seto, Shoo},
   title={Gradient estimates of a nonlinear parabolic equation under
   integral Bakry-\'Emery Ricci condition},
   journal={Differential Geom. Appl.},
   volume={98},
   date={2025},
   pages={Paper No. 102222, 16},
   issn={0926-2245},
   review={\MR{4841602}},
   doi={10.1016/j.difgeo.2024.102222},
}

\bib{RSWZ}{article}{
   author={Ramos Oliv\'{e}, Xavier},
   author={Seto, Shoo},
   author={Wei, Guofang},
   author={Zhang, Qi S.},
   title={Zhong-Yang type eigenvalue estimate with integral curvature
   condition},
   journal={Math. Z.},
   volume={296},
   date={2020},
   number={1-2},
   pages={595--613},
   issn={0025-5874},
   review={\MR{4140755}},
   doi={10.1007/s00209-019-02448-w},
}

\bib{rose}{article}{
   author={Rose, Christian},
   title={Li-Yau gradient estimate for compact manifolds with negative part
   of Ricci curvature in the Kato class},
   journal={Ann. Global Anal. Geom.},
   volume={55},
   date={2019},
   number={3},
   pages={443--449},
   issn={0232-704X},
   review={\MR{3936228}},
   doi={10.1007/s10455-018-9634-0},
}

\bib{rose2}{article}{
   author={Rose, Christian},
   title={Heat kernel upper bound on Riemannian manifolds with locally
   uniform Ricci curvature integral bounds},
   journal={J. Geom. Anal.},
   volume={27},
   date={2017},
   number={2},
   pages={1737--1750},
   issn={1050-6926},
   review={\MR{3625171}},
   doi={10.1007/s12220-016-9738-3},
}

\bib{rose3}{article}{
   author={Rose, Christian},
   title={Almost positive Ricci curvature in Kato sense---an extension of
   Myers' theorem},
   journal={Math. Res. Lett.},
   volume={28},
   date={2021},
   number={6},
   pages={1841--1849},
   issn={1073-2780},
   review={\MR{4477675}},
   doi={10.4310/mrl.2021.v28.n6.a8},
}

\bib{RoseStollmann}{article}{
   author={Rose, Christian},
   author={Stollmann, Peter},
   title={The Kato class on compact manifolds with integral bounds on the
   negative part of Ricci curvature},
   journal={Proc. Amer. Math. Soc.},
   volume={145},
   date={2017},
   number={5},
   pages={2199--2210},
   issn={0002-9939},
   review={\MR{3611331}},
   doi={10.1090/proc/13399},
}

\bib{RoseWei}{article}{
   author={Rose, Christian},
   author={Wei, Guofang},
   title={Eigenvalue estimates for Kato-type Ricci curvature conditions},
   journal={Anal. PDE},
   volume={15},
   date={2022},
   number={7},
   pages={1703--1724},
   issn={2157-5045},
   review={\MR{4520295}},
   doi={10.2140/apde.2022.15.1703},
}


\bib{Sturm1}{article}{
   author={Sturm, Karl-Theodor},
   title={On the geometry of metric measure spaces. I.},
   journal={Acta Math. },
   volume={196},
   date={2006},
   number={1},
   pages={65-131},
   review={\MR{2237206}},
   doi={10.1007/s11511-006-0002-8},
}

\bib{Sturm2}{article}{
   author={Sturm, Karl-Theodor},
   title={On the geometry of metric measure spaces. II.},
   journal={Acta Math. },
   volume={196},
   date={2006},
   number={1},
   pages={133-177},
   review={\MR{2237207}},
   doi={10.1007/s11511-006-0003-7},
}

\bib{Tadano}{article}{
   author={Tadano, Homare},
   title={Integral radial $m$-Bakry-\'Emery Ricci curvatures, Riccati
   inequalities, and Ambrose-type theorems},
   journal={Results Math.},
   volume={79},
   date={2024},
   number={5},
   pages={Paper No. 203, 28},
   issn={1422-6383},
   review={\MR{4773270}},
   doi={10.1007/s00025-024-02165-9},
}

\bib{Wang1}{article}{
   author={Wang, Wen},
   title={Harnack inequality, heat kernel bounds and eigenvalue estimates
   under integral Ricci curvature bounds},
   journal={J. Differential Equations},
   volume={269},
   date={2020},
   number={2},
   pages={1243--1277},
   issn={0022-0396},
   review={\MR{4088467}},
   doi={10.1016/j.jde.2020.01.003},
}

\bib{Wang2}{article}{
   author={Wang, Wen},
   title={Time analyticity for the parabolic type Schr\"odinger equation on
   Riemannian manifold with integral Ricci curvature condition},
   journal={Differential Geom. Appl.},
   volume={90},
   date={2023},
   pages={Paper No. 102045, 15},
   issn={0926-2245},
   review={\MR{4624422}},
   doi={10.1016/j.difgeo.2023.102045},
}

\bib{Wang3}{article}{
   author={Wang, Wen},
   title={Elliptic-type gradient estimates under integral Ricci curvature
   bounds},
   journal={Proc. Amer. Math. Soc.},
   volume={150},
   date={2022},
   number={11},
   pages={4965--4979},
   issn={0002-9939},
   review={\MR{4489327}},
   doi={10.1090/proc/14774},
}

\bib{WangWang}{article}{
   author={Wang, Jie},
   author={Wang, Youde},
   title={Gradient Estimates For $\Delta u + a(x)u\log u+b(x)u=0$ and its Parabolic Counterpart Under Integral Ricci Curvature Bounds},
   journal={arXiv:2109.05235},
   date={2021},
}

\bib{WangWei}{article}{
   author={Wang, Lili},
   author={Wei, Guofang},
   title={Local Sobolev constant estimate for integral Bakry-\'{E}mery Ricci
   curvature},
   journal={Pacific J. Math.},
   volume={300},
   date={2019},
   number={1},
   pages={233--256},
   issn={0030-8730},
   review={\MR{3985686}},
   doi={10.2140/pjm.2019.300.233},
}


\bib{Wu1}{article}{
   author={Wu, Jia-Yong},
   title={Comparison geometry for integral Bakry-\'{E}mery Ricci tensor bounds},
   journal={J. Geom. Anal.},
   volume={29},
   date={2019},
   number={1},
   pages={828--867},
   issn={1050-6926},
   review={\MR{3897035}},
   doi={10.1007/s12220-018-0020-8},
}

\bib{Wu2}{article}{
   author={Wu, Jia-Yong},
   title={Comparison geometry for integral radial Bakry-\'Emery Ricci tensor
   bounds},
   journal={Potential Anal.},
   volume={58},
   date={2023},
   number={1},
   pages={203--223},
   issn={0926-2601},
   review={\MR{4535923}},
   doi={10.1007/s11118-021-09937-w},
}




\bib{ZhangZhu1}{article}{
   author={Zhang, Qi S.},
   author={Zhu, Meng},
   title={Li-Yau gradient bound for collapsing manifolds under integral
   curvature condition},
   journal={Proc. Amer. Math. Soc.},
   volume={145},
   date={2017},
   number={7},
   pages={3117--3126},
   issn={0002-9939},
   review={\MR{3637958}},
   doi={10.1090/proc/13418},
}

\bib{ZhangZhu2}{article}{
   author={Zhang, Qi S.},
   author={Zhu, Meng},
   title={Li-Yau gradient bounds on compact manifolds under nearly optimal
   curvature conditions},
   journal={J. Funct. Anal.},
   volume={275},
   date={2018},
   number={2},
   pages={478--515},
   issn={0022-1236},
   review={\MR{3802491}},
   doi={10.1016/j.jfa.2018.02.001},
}

\end{biblist}
\end{bibdiv}

\end{document}